\documentclass[12pt]{amsart}

\usepackage{times}      
\usepackage{amssymb}    
\usepackage{amsmath}    
\usepackage{amsthm}
\usepackage{tikz-cd}
\usepackage{placeins}
\usepackage{hyperref}

\usepackage{url}
\makeatletter
\def\url@leostyle{%
  \@ifundefined{selectfont}{\def\UrlFont{\sf}}{\def\UrlFont{\scriptsize\ttfamily}}}
\makeatother
\urlstyle{leo}

\newtheorem{theorem}{Theorem}[section]
\newtheorem{corollary}[theorem]{Corollary} 
\newtheorem{lemma}[theorem]{Lemma} 
\newtheorem{proposition}[theorem]{Proposition} 

\newtheorem*{thma}{Theorem A}
\newtheorem*{thmb}{Theorem B}
\newtheorem*{corc}{Corollary C}

\theoremstyle{definition}
\newtheorem{definition}[theorem]{Definition} 
\newtheorem*{acknowledge}{Acknowledgements}

\newtheorem{example}[theorem]{Example} 

\def\nnn{\mathbb{N}}
\def\rrr{\mathbb{R}}
\def\ccc{\mathbb{C}}
\def\zzz{\mathbb{Z}}

\def\o{\mathcal{O}^{\cdot, D, \cdot}_{k, \cdot, v} (n)}
\def\a{\mathrm{Alex}^{\cdot, D, \cdot}_{k, \cdot, v} (n)}
\def\m{\mathcal{M}^{\cdot, D, \cdot}_{k, \cdot, v} (n)}
\def\on{\mathrm{O}(n)}
\def\meq{\mathcal{M}^{\mathrm{c}}_{\mathrm{eq}}}
\def\ooo{\Omega}

\def\ali{^{\alpha}_i}
\def\pal{p^{\alpha}}
\def\pali{p^{\alpha}_i}
\def\ual{U^{\alpha}}
\def\uali{U^{\alpha}_i}

\def\cal{\tilde{U}^{\alpha}}
\def\cali{\tilde{U}^{\alpha}_i}

\def\id{\mathrm{id}_G}
\def\dist{\mathrm{dist}}
\def\vol{\mathrm{vol}}
\def\co{\colon\thinspace}
\def\diam{\mathrm{diam}}
\def\embed{\hookrightarrow}

\def\st{\mathrel{}\middle|\mathrel{}}

\begin{document}

\title{Equivariant Alexandrov geometry and orbifold finiteness}

\author{John Harvey}
\address{Mathematisches Institut, Universit\"{a}t M\"{u}nster, Einsteinstr. 62, 48149 M\"{u}nster, Germany. }
\email{harveyj@uni-muenster.de}

\subjclass[2010]{Primary: 53C23; Secondary: 53C20, 51K10, 57R18, 58J53} 

\date{May 6, 2015.}
\thanks{This paper has been published by the Journal of Geometric Analysis, with DOI \href{http://dx.doi.org/10.1007/s12220-015-9614-6}{10.1007/s12220-015-9614-6}.}

\begin{abstract}
Let a compact Lie group act isometrically on a non-collapsing sequence of compact Alexandrov spaces with fixed dimension and uniform lower curvature and upper diameter bounds. If the sequence of actions is equicontinuous and converges in the equivariant Gromov--Hausdorff topology, then the limit space is equivariantly homeomorphic to spaces in the tail of the sequence.

As a consequence, the class of Riemannian orbifolds of dimension $n$ defined by a lower bound on the sectional curvature and the volume and an upper bound on the diameter has only finitely many members up to orbifold homeomorphism. Furthermore, any class of isospectral Riemannian orbifolds with a lower bound on the sectional curvature is finite up to orbifold homeomorphism.
\end{abstract}

\maketitle

\section{Introduction}

The Gromov--Hausdorff topology on the set of all compact metric spaces has been widely studied since its introduction by Gromov in 1981 \cite{gromov}. Consideration of this topology led naturally to the definition of new classes of metric spaces of geometric interest. The present work considers Alexandrov spaces. 

An Alexandrov space has a lower curvature bound which generalizes the lower sectional curvature bound on a Riemannian manifold. These spaces arise naturally as limits of sequences of Riemannian manifolds with a uniform lower sectional curvature bound. 

One of the deepest results in Alexandrov geometry is Perelman's Stability Theorem \cite{perstab}, which states that if a sequence of compact Alexandrov spaces has a uniform lower curvature bound, and neither grows unboundedly in terms of its diameter nor collapses in terms of its dimension, its topological type does not change on passage to the limit. 

This result is almost omnipresent in Alexandrov geometry. One may construct the tangent cone of an Alexandrov space at a point $p$ by taking the limit of the space under rescaling around $p$. The Stability Theorem  shows that the space is locally homeomorphic to its tangent cone, and therefore, at least topologically, its singularities are very controlled. 

It is desirable to obtain an analogous convergence result in the equivariant setting. Here the appropriate topology is Fukaya's equivariant Gromov--Hausdorff topology \cite{fukaya}.

In this vein, Searle and the author showed that an isometric action on an Alexandrov space is locally determined by the isotropy action at the point \cite{HS}. The main theorem of the present work gives a sufficient condition for a convergent sequence of $G$--actions on Alexandrov spaces with a uniform lower curvature bound to be stable, in the sense that the limiting action is equivariantly homeomorphic to those in the tail of some subsequence.

\begin{thma}\label{t:equistab}Let $G$ be a compact Lie group and let $X_i$ be a sequence of compact Alexandrov spaces of fixed dimension $n$, with curvature bounded below by $k$ and diameter bounded above by $D$, each with an effective isometric action of $G$. Suppose that $(X_i, G)$ converges in the equivariant Gromov--Hausdorff topology to $(X,\Gamma)$, where $X$ is also of dimension $n$. Suppose further that the sequence of actions is equicontinuous. 
	
Then the groups $G$ and $\Gamma$ are isomorphic and, for large $i$, the spaces $X_{i}$ are equivariantly homeomorphic to $X$. 

Furthermore, if $\theta_i \co X/G \to X_i / G$ are homeomorphisms which witness the Gromov--Hausdorff convergence of the orbit spaces, then there is a subsequence for which the equivariant homeomorphisms $X \to X_i$ can be chosen so that they descend to $\theta_i$.
\end{thma}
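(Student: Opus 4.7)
The strategy is three-fold: identify $\Gamma$ with $G$, pass to the orbit spaces and apply Perelman's Stability Theorem there, and finally lift the resulting quotient homeomorphisms to equivariant ones using the local equivariant rigidity of \cite{HS} together with a global patching argument. For the first step, the equicontinuity of the isometric actions $\rho_i \co G \to \mathrm{Isom}(X_i)$ permits an Arzel\`a--Ascoli argument along approximate equivariant Gromov--Hausdorff isometries, producing (after passing to a subsequence) a continuous limit action of $G$ on $X$ by isometries. One then identifies this limit with the $\Gamma$-action and checks that the associated homomorphism $G \to \Gamma$ is an isomorphism of Lie groups. Effectiveness of the limit follows from $\dim X_i = \dim X = n$: any $g$ in the kernel would act by an equicontinuous sequence of maps uniformly approximating the identity on spaces of full dimension $n$, contradicting effectiveness on $X_i$.

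For the second step, equivariant convergence $(X_i, G) \to (X, \Gamma)$ descends to ordinary Gromov--Hausdorff convergence $X_i/G \to X/\Gamma$. Each orbit space is a compact Alexandrov space of curvature $\geq k$ and diameter $\leq D$. Equicontinuity together with $\dim X_i = \dim X = n$ forces the principal orbit dimension to stabilize, so $\dim X_i/G = \dim X/\Gamma$ for large $i$ and the orbit spaces form a non-collapsing sequence. Perelman's Stability Theorem then supplies homeomorphisms $\theta_i \co X/G \to X_i/G$ realizing the convergence.

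The hard step is the third: constructing the equivariant lifts $\tilde\theta_i \co X \to X_i$. I would cover $X/G$ by finitely many open sets $V_\alpha$ chosen so that each saturation $\pi^{-1}(V_\alpha)$ is an equivariant tube $G \times_{H_\alpha} S_\alpha$ provided by the slice theorem. Fix a base orbit $p_\alpha \in \pi^{-1}(V_\alpha)$ and a compatible sequence $p^i_\alpha \in X_i$. The isotropy representations on the spaces of directions at $p^i_\alpha$ converge equivariantly to that at $p_\alpha$, as a consequence of the equivariant Gromov--Hausdorff convergence. The local rigidity result of \cite{HS} should then guarantee that, for large $i$, the saturation of $\theta_i(V_\alpha)$ in $X_i$ is equivariantly homeomorphic to $G \times_{H_\alpha} S_\alpha$, yielding a local equivariant homeomorphism $\tilde\theta^\alpha_i$ lifting $\theta_i$ over $V_\alpha$.

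The principal difficulty is patching the $\tilde\theta^\alpha_i$ into a single global equivariant homeomorphism that still descends to $\theta_i$. I expect to follow an equivariant version of Perelman's inductive gluing: processing the tubes one at a time and modifying the candidate map on each step by an equivariant isotopy supported in a tube, constructed from the slice data, with subsequences extracted as necessary to control the smallness of each adjustment. The delicate point is that equivariant homeomorphisms cannot be combined by a partition-of-unity argument; each adjustment must respect the orbit-type stratification and must restrict to the identity on the quotient in order to keep compatibility with $\theta_i$ throughout the induction.
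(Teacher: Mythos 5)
Your first two steps broadly match the paper's set-up (an Arzel\`a--Ascoli limit action extracted via equicontinuity, non-collapse of the orbit spaces, Perelman stability for the quotients), but the proposal has two genuine gaps exactly at the points where the real work lies. First, your local step assumes that the isotropy data converges: you assert that the isotropy representations at $p^\alpha_i$ converge to that at $p_\alpha$ ``as a consequence of the equivariant Gromov--Hausdorff convergence,'' and that the tubes in $X_i$ over $\theta_i(V_\alpha)$ are equivariantly modelled on $G\times_{H_\alpha}S_\alpha$. This is not automatic: isotropy can only jump \emph{up} in the limit, and ruling out a strict jump (i.e.\ showing every limit orbit type is realized by nearby points in $X_i$) is a substantive argument. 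The paper devotes a whole step to it (``the orbit-type survives passage to the limit''), using uniform-radius tubes built from Perelman--Petrunin strictly concave functions and a refined local stability statement (Proposition \ref{p:localstab}), and it also needs the quotient homeomorphisms to respect orbit types, which plain Perelman stability does not give; one must apply the \emph{relative} Stability Theorem to the sets $X^H$, which are extremal in $X/G$ by Proposition \ref{p:extremalisotropy}. Your $\theta_i$, as constructed, need not carry orbit-type strata to orbit-type strata, so the local lifts you want need not exist over $\theta_i(V_\alpha)$ at all.

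Second, the global patching is not actually supplied. You correctly identify that equivariant homeomorphisms cannot be glued by partitions of unity and that each correction must cover the identity on the quotient and preserve the orbit-type stratification, but ``an equivariant version of Perelman's inductive gluing'' is precisely the missing theorem, not a routine adaptation: gluing local equivariant lifts over a fixed quotient map is essentially the classification of $G$--spaces over a given orbit space, in a parametrized form adapted to a convergent sequence. The paper resolves this by quoting the Covering Sequence Theorem (Theorem \ref{t:cst}) from \cite{hcloseactions}, a refinement of Palais' classification for $G$--spaces over tamely partitioned abstract orbit spaces, and then proving separately (Section \ref{s:tameness}) that the partition of an Alexandrov orbit space by the extremal sets $X^H$ is tame --- an induction on dimension using cone-like neighborhoods and controlled homotopies on spaces of directions. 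Without either this theorem or a worked-out substitute for it, your final step is a statement of intent rather than a proof, and it is the heart of the matter.
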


The theorem can also be stated as follows: In the space of Alexandrov $G$--spaces of dimension $n$ with curvature bounded below by $k$ with the equivariant Gromov--Hausdorff topology, every point has a neighborhood consisting only of Alexandrov spaces to which it is equivariantly homeomorphic.

Placed in this form, the original Stability Theorem is also a major contribution towards the question of how the geometry of a Riemannian manifold controls its topology.
The problem of finding geometric constraints to define a  class of manifolds which is finite up to homotopy, homeomorphism or diffeomorphism has a long history.

If the class is defined by bounds on sectional curvature, diameter and volume, then a convenient notation is to write $\mathcal{M}^{K, D, V}_{k, d, v} (n)$.
This represents the class of all Riemannian manifolds $(M,g)$ with $k \leq \sec_g \leq K$, $d \leq \diam (M) \leq D$ and $v \leq \vol (M) \leq V$. 
Where a value is replaced with ``$\cdot$'' the condition is understood to be deleted.

The first such result is that of Weinstein, who showed that, for $\delta > 0$, $\mathcal{M}^{1, \cdot, \cdot}_{\delta, \cdot, \cdot} (2n)$, the class of unformly pinched positively curved manifolds of even dimension, has only finitely many members up to homotopy \cite{weinstein}.
Shortly after this, Cheeger showed that, for $n \neq 4$, $\mathcal{M}^{K, D, \cdot}_{k, \cdot, v} (n)$ has finitely many simply connected members up to diffeomorphism  \cite{cheeger}. In \cite{peters}, Peters shows that this result still holds for $n=4$, as well as removing the hypothesis of simple connectivity.

Grove and Petersen removed the upper bound on sectional curvature, and obtained finiteness of $\m$ up to homotopy \cite{gphomotopy}.
Shortly afterwards, in collaboration with Wu, this result was improved to show finiteness up to homeomorphism for $n \geq 4$ \cite{gpw}. 
As long as the dimension is not four, the work of Kirby and Siebenmann \cite{ks} implies finiteness up to diffeomorphism.

Perelman's Stability Theorem \cite{perstab} showed that $\a$, the corresponding class of Alexandrov spaces, is finite up to homeomorphism. \emph{A fortiori}, this generalizes the Grove--Petersen--Wu finiteness result to all dimensions.

Theorem B uses the equivariant version of the Stability Theorem to generalize the homeomorphism finiteness result of Grove, Petersen and Wu to the area of Riemannian orbifolds. 
An orbifold is a mild generalization of a manifold, and, to give just a few examples, the concept has found applications in Thurston's work on the Geometrization Conjecture \cite{thurston}, the construction of a new positively curved manifold by Dearricott \cite{Dear} and Grove--Verdiani--Ziller \cite{gvz}, and string theory, such as Dixon, Harvey, Vafa and Witten's conformal field theory built on a quotient of a torus \cite{dhvw}.
The same convenient notation can be used for orbifolds, here replacing $\mathcal{M}$ with $\mathcal{O}$.

The first finiteness result for orbifolds is that of Fukaya \cite{fukaya}, who generalized the result of Cheeger, showing that a subclass of $\mathcal{O}^{K, D, \cdot}_{k, \cdot, v} (n)$ is finite up to orbifold diffeomorphism. 
Fukaya used a much more restrictive definition of orbifold, considering only the orbit spaces of global actions by finite groups on Riemannian manifolds. 
This corresponds to what Thurston called a ``good'' orbifold \cite{thurston}.

Working in dimension two, Proctor and Stanhope showed that $\mathcal{O}^{\cdot, D, \cdot}_{k, \cdot, v} (2)$ is finite up to orbifold diffeomorphism \cite{procstan}, providing a first generalization of the result of Grove, Petersen and Wu.
The homeomorphism finiteness result was then shown in all dimensions by Proctor, provided the orbifold has only isolated singularities \cite{proc}. 
Here that assumption is removed, completing the generalization of Grove--Petersen--Wu's homeomorphism finiteness.

\begin{thmb}
	For any $k, D, v, n$, the class $\o$ has only finitely many members up to orbifold homeomorphism.
\end{thmb}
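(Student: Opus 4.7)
The plan is to reduce Theorem~B to Theorem~A via the orthonormal frame bundle construction. To any Riemannian orbifold $\mathcal{O}$ of dimension $n$ is associated a compact smooth manifold $\mathrm{Fr}(\mathcal{O})$ of dimension $N = n + \binom{n}{2}$, carrying a natural effective, almost free isometric $\on$-action whose orbit space is $\mathcal{O}$. The total space is genuinely a manifold because the local uniformizing groups, being finite, act freely on frames. A standard but essential feature of this construction is that two Riemannian orbifolds are orbifold-homeomorphic if and only if their frame bundles admit an $\on$-equivariant homeomorphism, and I will use this correspondence in both directions.

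The next step is to verify that the frame bundles of orbifolds in $\o$ satisfy the hypotheses of Theorem~A. Equip $\mathrm{Fr}(\mathcal{O})$ with the canonical metric for which the projection $\mathrm{Fr}(\mathcal{O}) \to \mathcal{O}$ is a Riemannian submersion with totally geodesic fibers isometric to $\on$ with a bi-invariant metric. Applying O'Neill's formula locally in orbifold charts yields a uniform lower sectional curvature bound $k' = k'(k,n)$; the diameter is bounded above by $D + \diam(\on)$; and $\vol(\mathrm{Fr}(\mathcal{O})) = \vol(\mathcal{O}) \cdot \vol(\on) \geq v \cdot \vol(\on)$.

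Suppose for contradiction that $\o$ contains infinitely many members that are pairwise not orbifold-homeomorphic, with representatives $\mathcal{O}_i$, and set $X_i = \mathrm{Fr}(\mathcal{O}_i)$. Each $X_i$ is a compact Riemannian $N$-manifold with $\sec \geq k'$ and $\diam \leq D + \diam(\on)$, equipped with an effective isometric action of $\on$. By Fukaya's equivariant Gromov--Hausdorff precompactness \cite{fukaya}, a subsequence of $(X_i, \on)$ converges to some $(X, \Gamma)$. The uniform lower volume bound rules out collapse, so $\dim X = N$. Equicontinuity of the sequence of isometric actions by the fixed compact Lie group $\on$ is automatic, since each group element acts as a $1$-Lipschitz map with displacement controlled by any fixed bi-invariant distance on $\on$. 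Theorem~A then furnishes, for large $i$, an $\on$-equivariant homeomorphism $X_i \cong X$; composing gives equivariant homeomorphisms $X_i \cong X_j$ for large $i,j$, which descend to orbifold homeomorphisms $\mathcal{O}_i \cong \mathcal{O}_j$, contradicting the choice of sequence.

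The main obstacle is not the contradiction argument itself but the foundational work surrounding the frame-bundle functor for Riemannian orbifolds. One must carefully justify the uniform lower curvature bound on $\mathrm{Fr}(\mathcal{O})$ in the singular orbifold setting via a local application of O'Neill's formula on orbifold charts, and, more delicately, one must verify that the merely topological $\on$-equivariant homeomorphism produced by Theorem~A still descends to a bona fide orbifold homeomorphism of the quotients rather than only a homeomorphism of underlying topological spaces. Once these technicalities are in place, precompactness followed by Theorem~A closes the argument.
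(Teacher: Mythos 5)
There is a genuine gap, and it sits exactly where your argument does its main work: the claim that $\mathrm{Fr}(\mathcal{O})$ inherits a uniform lower sectional curvature bound $k'(k,n)$ from the bound $\sec \geq k$ on $\mathcal{O}$. O'Neill's formulas go the wrong way for this. With the canonical connection metric, a horizontal plane upstairs has curvature of the form $K(X,Y) - 3\|A_XY\|^2$, and the $A$--tensor of the frame bundle is essentially the curvature form of the base, so $\|A_XY\|^2 \approx c\,\|R(X,Y)\|^2$; mixed and vertical planes likewise involve $\|R\|^2$ and derivatives of $R$. A lower bound $\sec\geq k$ alone puts no upper bound on $\|R\|$, so the curvature of the frame bundle can be arbitrarily negative along a sequence in $\o$ (take base curvatures blowing up to $+\infty$: upstairs one sees roughly $K - 3cK^2 \to -\infty$). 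Consequently the frame bundles $X_i$ do not lie in any fixed Alexandrov class ``curvature $\geq k'$, dimension $N$'', equivariant precompactness with non-collapse is not available for them, and Theorem~A (whose hypotheses require a uniform lower curvature bound on the $X_i$) cannot be applied. This is precisely why Fukaya's frame-bundle proof of orbifold finiteness needs two-sided curvature bounds, i.e.\ applies to $\mathcal{O}^{K,D,\cdot}_{k,\cdot,v}(n)$, and why Theorem~B, whose whole point is to drop the upper bound, is proved in the paper by a different route: one works directly on the orbifolds as Alexandrov spaces, shows that the closures of the strata of fixed local action are extremal sets, uses the relative Stability Theorem to produce stratum-preserving homeomorphisms between nearby orbifolds, and then applies Theorem~A only locally, to the linear charts over cone-like neighborhoods with their finite local group actions, to upgrade those homeomorphisms to orbifold homeomorphisms.

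A secondary point: your asserted equivalence ``orbifold-homeomorphic if and only if the frame bundles are $\on$--equivariantly homeomorphic'' is also shakier than stated. A merely topological (reduced) orbifold homeomorphism does not act on frames at all, so the forward direction really requires diffeomorphisms; and in the direction you actually use, an equivariant homeomorphism of the total spaces gives a homeomorphism of quotients preserving local groups, but showing it lifts locally to equivariant homeomorphisms of charts still needs an argument of the same flavor as the chart-by-chart work the paper does via Lemma~\ref{l:chart} and Theorem~A. Even granting that, the curvature issue above is fatal to the reduction as proposed.
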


By Weyl's asymptotic formula, which Farsi has shown is valid for orbifolds \cite{farsi}, a Laplace isospectral class of orbifolds has fixed volume and dimension. Stanhope has shown that, in the presence of a lower bound on Ricci curvature, such a class has a uniform upper bound on its diameter \cite{stan}, and so, just as in \cite{proc}, the following corollary is clear.

\begin{corc}
	Any class of Laplace isospectral orbifolds with a uniform lower bound on its sectional curvature has only finitely many members up to orbifold homeomorphism.
\end{corc}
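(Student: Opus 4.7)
The plan is to show that any Laplace isospectral class $\mathcal{F}$ of Riemannian orbifolds with $\sec \geq k$ sits inside some class $\o$ to which Theorem B applies. Since Theorem B is already in hand, the entire corollary reduces to harvesting the four parameters $k$, $D$, $v$, $n$ from the spectral and curvature hypotheses.

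The curvature lower bound $k$ is built into the hypothesis. For the dimension $n$ and a lower volume bound $v$, I would invoke Farsi's extension of Weyl's asymptotic law to Riemannian orbifolds \cite{farsi}: the leading asymptotics of the eigenvalue counting function determine both the dimension and the volume of the orbifold from the Laplace spectrum alone. Hence every member of $\mathcal{F}$ shares a common dimension $n$ and a common volume $V$, and one may set $v := V$.

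For the upper diameter bound $D$, I would observe that $\sec \geq k$ forces the Ricci bound $\mathrm{Ric} \geq (n-1)k$, and then apply Stanhope's result \cite{stan} that any isospectral class satisfying a uniform lower Ricci bound has uniformly bounded diameter. With all four parameters now in place, $\mathcal{F} \subseteq \o$, and Theorem B delivers finiteness up to orbifold homeomorphism.

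There is essentially no obstacle internal to the corollary; the substance lies entirely in Theorem B (and, beneath it, in the equivariant stability result Theorem A). The only work is the verification that an isospectral class with $\sec \geq k$ simultaneously satisfies the dimension, volume and diameter conditions required by Theorem B, which is what the citations to Farsi and Stanhope accomplish. This is precisely the sense in which the excerpt asserts that the corollary is \emph{clear}.
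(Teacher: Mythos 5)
Your proposal is correct and follows exactly the paper's route: Farsi's orbifold Weyl law fixes the dimension and volume, the implication $\sec \geq k \Rightarrow \mathrm{Ric} \geq (n-1)k$ lets Stanhope's result supply the diameter bound, and Theorem B then gives finiteness. This is precisely the argument the paper sketches when it calls the corollary clear.
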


This generalizes the similar result of Brooks, Perry and Petersen for Laplace isospectral manifolds \cite{bpp}. While one cannot hear the shape of an orbifold, one can, at least in the presence of a lower sectional curvature bound, know that there are only finitely many possibilities.

\begin{acknowledge}This research was carried out as part of the author's dissertation project at the University of Notre Dame, with the ever-helpful advice of Karsten Grove. During that time, the author was supported in part by a grant from the U.S. National Science Foundation.
	
The author is grateful to Vitali Kapovitch and Curtis Pro for interesting and helpful conversations on this subject, and to Karsten Grove for pointing out the possibility of using \cite{gkr} to prove Proposition \ref{p:equicontinuity}. \end{acknowledge}

\section{Preliminaries}

\subsection{Gromov--Hausdorff topologies}

A general approach for proving finiteness results such as Theorem B \cite{gpw,perstab,proc} is to combine a compactness or precompactness result for the class under consideration with a stability result. 
A particularly useful topology (in fact, a metric) on the set of isometry classes of compact metric spaces was proposed by Gromov \cite{gromov}. 
Gromov's metric generalizes the Hausdorff metric on the closed subsets of a compact metric space.

\begin{definition}Let $(X,d_X)$ and $(Y,d_Y)$ be metric spaces. A function $f \co X \to Y$ (not necessarily continuous) is called an Gromov--Hausdorff $\epsilon$--approximation if, for all $p,q \in X$, $\left| d_X(p,q) - d_Y(f(p),f(q)) \right| \leq \epsilon$ and an $\epsilon$--neighborhood of the image of $f$ covers all of $Y$.\end{definition}

\begin{definition}The \emph{Gromov--Hausdorff distance} between two compact metric spaces $(X, d_X)$ and $(Y, d_Y)$ is the infimum of the set of all $\epsilon$ such that there are Gromov--Hausdorff $\epsilon$--approximations $X \to Y$ and $Y \to X$.\end{definition}

The equivariant Gromov--Hausdorff topology was first defined by Fukaya \cite{fukaya}, and achieved its final form some years later in his work with Yamaguchi \cite{fyannals}. 
Consider the set of ordered pairs $(M, \Gamma)$ where $M$ is a compact metric space and $\Gamma$ is a closed group of isometries of $M$. Say that two pairs are equivalent if they are equivariantly isometric up to an automorphism of the group.
Let $\meq$ be the set of equivalence classes of such pairs.

\begin{definition}Let $(X,\Gamma),(Y, \Lambda) \in \meq$. An \emph{equivariant Gromov--Hausdorff $\epsilon$--approximation} is a triple $(f,\phi,\psi)$ of functions $f \co X \to Y$, $\phi \co \Gamma \to \Lambda$ and $\psi \co \Lambda \to \Gamma$ such that
	\begin{enumerate}
		\item $f$ is an Gromov--Hausdorff $\epsilon$--approximation;
		\item if $\gamma \in \Gamma, x \in X$, then $\dist(f(\gamma x) , \phi(\gamma) f(x) ) < \epsilon$; and
		\item if $\lambda \in \Lambda, x \in X$, then $\dist(f(\psi(\lambda) x) , \lambda f(x) ) < \epsilon$.
	\end{enumerate}
\end{definition}

Note that these functions need not be morphisms from the relevant category. The equivariant Gromov--Hausdorff distance is defined from these approximations just as with the standard Gromov--Hausdorff distance.

An alternative definition was provided by Paulin (attributed by him to Bonahon) \cite{paulin}. This definition requires the same group to act on both spaces. A different Gromov--Hausdorff approximation is used for each finite subgroup, and  that approximation must be exactly equivariant with respect to the action of the subgroup. Under this definition two spaces might be considered to be separated by a positive distance if they differ only by an automorphism of the group.

By \cite[Proposition 3.6]{fyannals}, given a sequence in $\meq$, if the sequence of underlying metric spaces converges in the Gromov--Hausdorff topology to a compact metric space then there is a subsequence which converges in the equivariant Gromov--Hausdorff topology.

By \cite[Theorem 2.1]{fukaya}, the sequence of orbit spaces corresponding to a convergent sequence in $\meq$ must itself converge in the usual Gromov--Hausdorff topology.

The following two examples demonstrate the types of convergence that can occur without the hypotheses of Theorem A. In the first case, the group is not fixed. In the second example, the group has been fixed but its actions are not equicontinuous.

\begin{example}Let $\zzz_p$, the cyclic group of order $p$, act freely on $S^3$ with orbit space $S^3 / \zzz_p \cong L_{p,1}$. Then, as $p \to \infty$, the limit action is that of a circle. The lens spaces collapse to a limit orbit space homeomorphic to $\ccc \mathrm{P}^1$.\end{example}

\begin{example}Let $T^2$ act isometrically on the round sphere $S^3$. This torus has two distinguished circle subgroups which act so as to give a disk for orbit space. Consider the circle subgroup $S^1_p$ of $T^2$ which winds around the first of these subgroups $p$ times and the second once. The orbit space of this circle action is the so-called ``weighted'' projective space $\ccc \mathrm{P}^1_{p,1}$. The limit action on this occasion is that of the full $T^2$. The weighted projective spaces collapse to a limit orbit space homeomorphic to an interval.\end{example}

Convergence of non-compact spaces can also be defined by adding a basepoint, provided that the closed metric balls around the basepoint are compact. Such sequences are said to converge if, for every $r > 0$, the closed metric balls of radius $r$ around the basepoint converge. Where equivariant  convergence of non-compact spaces is considered in the present work, the basepoint will always be fixed by the group. In this case, convergence also reduces to the convergence of closed balls.

\subsection{Basics of Alexandrov geometry}

Certain curvature conditions define precompact subsets of the set of all compact metric spaces. 
For example, Gromov showed that the class of all Riemannian manifolds of dimension $n$, with diameter less than $D$, and with Ricci curvature greater than $(n-1)k$ is precompact \cite{gromov}. 
Strengthening the curvature condition to require a lower bound on the sectional curvature provides much more structure on the limit spaces, and it is in this context that Alexandrov geometry was first studied.

It is possible to show that, for a Riemannian manifold, the condition that sectional curvature be $\geq k$ can be expressed as a triangle-comparison condition.
Grove and Petersen showed \cite{gpjdg} that the closure of $\m$ is contained within the class of all complete length metric spaces satisfying this triangle-comparison condition.
It is natural, then, to study this class in its own right.

\begin{definition}
An \textit{Alexandrov space} of finite dimension $n\geq 1$ is a locally complete, locally compact, connected  length space, with a lower curvature bound in the triangle-comparison sense. By convention, a $0$--dimensional Alexandrov space is either a one-point or a two-point space.
\end{definition}

Many fundamental results in this area were proved by Burago, Gromov and Perelman \cite{bgp}, and this paper is a good general reference for the subject. 
They showed that the class of all Alexandrov spaces with curvature bounded below by $k$ is closed under passing to Gromov--Hausdorff limits, and under quotients by isometric group actions. Given a sequence of spaces with a uniform lower curvature bound and fixed dimension $n$, the limit space has dimension at most $n$. 

Let $X$ be an Alexandrov space, and let $p \in X$. 
Then, also by \cite{bgp}, there is a uniquely defined tangent cone at $p$, $T_p X$, which can be obtained as a limit object by rescaling $X$ around $p$.
$T_p X$ is itself an Alexandrov space, with curvature $\geq 0$. 

The most important singularities of an Alexandrov space are its extremal subsets, introducted by Perelman and Petrunin \cite{perpet}.
The distance functions in an Alexandrov space have well-defined gradients, and it is possible to flow along these gradients. 
The gradient flow gives a natural way to understand an extremal subset.

\begin{definition}Let $X$ be an Alexandrov space. A subset $E \subset X$ is extremal if, for every $p \in X$, the flow along the gradient of $\dist(p,\cdot)$ preserves $E$.\end{definition}

Trivial examples of extremal sets are the empty set, and the entire space $X$. Any point having a space of directions with diameter $\leq \pi/2$ is extremal, as is the boundary of an Alexandrov space. The extremal subsets stratify the space into manifolds, and informally they are usefully thought of as strata with small normal spaces. Of greatest interest for the topic under discussion is the following result \cite{perpet}.

\begin{proposition}\label{p:extremalisotropy}Let $X$ be an Alexandrov space, and let $G$ be a compact Lie group acting on $X$ by isometries. Let $X^H$ be the closure of the set of points in the orbit space $X/G$ which are the image of points with isotropy $H$. Then $X^H$ is an extremal subset of $X/G$.\end{proposition}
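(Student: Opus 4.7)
My approach is to lift the problem to $X$ itself, where the $G$-equivariance of gradient flows of $G$-invariant functions makes the desired invariance transparent. Write $\pi \co X \to X/G$ for the orbit map and $F = \{x \in X : hx = x \text{ for all } h \in H\}$ for the closed fixed-point set of $H$.

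The first task is to identify $X^H$ with $\pi(F)$. If $X^H$ is empty, extremality is vacuous; otherwise, choose $x_0 \in X$ with $G_{x_0} = H$, so $x_0 \in F$. Applying the principal orbit theorem for isometric actions on Alexandrov spaces to the induced action of $N_G(H)/H$ on $F$, the set $\{x \in F : G_x = H\}$ (its trivial-isotropy locus) is open and dense in $F$. Since $\pi$ is closed, passing to closures gives $X^H = \pi(F)$.

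Next, fix $\bar p \in X/G$, let $f = \dist(\bar p, \cdot)$ on $X/G$, and pick any lift $p \in \pi^{-1}(\bar p)$. The function $\tilde f(x) = \dist(Gp, x)$ on $X$ is $G$-invariant and satisfies $\tilde f = f \circ \pi$ by the submetry property of $\pi$. Its Perelman--Petrunin gradient flow $\tilde\Phi_t$ commutes with the $G$-action, since $\tilde f$ is $G$-invariant and differentials of isometries transport gradients, giving $g\,\tilde\Phi_t(x) = \tilde\Phi_t(gx)$. This equivariance immediately implies $\tilde\Phi_t(F) \subseteq F$: for $q \in F$ and $h \in H$, $h\,\tilde\Phi_t(q) = \tilde\Phi_t(hq) = \tilde\Phi_t(q)$. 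Moreover, the equivariant flow $\tilde\Phi_t$ descends through $\pi$ to a flow on $X/G$ which, by the submetry structure, coincides with the intrinsic gradient flow $\Phi_t$ of $f$ (the gradient of a $G$-invariant pullback is the horizontal lift of the gradient below). Combining,
\[
\Phi_t(X^H) = \Phi_t(\pi(F)) = \pi(\tilde\Phi_t(F)) \subseteq \pi(F) = X^H,
\]
so $X^H$ is extremal.

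The main technical hurdle I foresee is the descent claim: verifying that the pushforward of $\tilde\Phi_t$ through $\pi$ really is the intrinsic Perelman--Petrunin gradient flow of $f$. This rests on general facts about how gradients of semiconcave functions behave under submetries. A secondary delicate point is the invocation of the principal orbit theorem in the Alexandrov setting, needed to identify $X^H$ with $\pi(F)$; this can ultimately be traced back to the theory of isometric group actions on Alexandrov spaces developed in the wake of \cite{bgp} and \cite{perpet}.
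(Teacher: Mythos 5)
The paper itself offers no proof of this proposition (it is quoted from Perelman--Petrunin \cite{perpet}), so your proposal must stand on its own, and it has a genuine gap at its first step: the identification $X^H = \pi(F)$, where $F = \mathrm{Fix}(H)$. The points with isotropy exactly $H$ need not be dense in $\mathrm{Fix}(H)$: the fixed-point set is in general disconnected, and it can contain pieces consisting entirely of points with strictly larger isotropy, lying at positive distance from the exactly-$H$ locus. The appeal to a principal orbit theorem for the induced $N_G(H)/H$--action on $F$ does not close this: that theorem produces an open dense principal stratum only on a connected space, the principal isotropy need not be trivial, and in any case triviality of the $N_G(H)/H$--isotropy at $x \in F$ (that is, $N_G(H) \cap G_x = H$) does not force $G_x = H$, since a proper subgroup of a compact group can be self-normalizing. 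Concretely, let $S^1$ act on $\ccc\mathrm{P}^2$ with the Fubini--Study metric by $z \cdot [v_0 : v_1 : v_2] = [v_0 : z v_1 : z^2 v_2]$, and take $H = \zzz_2 \subset S^1$. Then $\mathrm{Fix}(H) = \{ v_1 = 0 \} \cup \{ [0:1:0] \}$, the exactly-$\zzz_2$ locus is $\{v_1 = 0\}$ with the two circle-fixed points removed, and its closure is $\{v_1=0\} \cong \ccc\mathrm{P}^1$, which misses the isolated point $[0:1:0]$ (whose isotropy is all of $S^1$). Hence $X^H \subsetneq \pi(F)$ in general.

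Because of this, your flow argument proves the wrong statement. Granting the two technical facts you flag---equivariance of the Petrunin gradient flow under isometries, and the fact that gradient flows commute with submetries such as $\pi$, together with semiconcavity of $\dist(G\cdot p,\cdot)$ away from the orbit---what you establish is that $\pi(\mathrm{Fix}(H))$, i.e.\ the set of orbits whose isotropy contains a conjugate of $H$, is invariant under the gradient flows of all distance functions on $X/G$, hence extremal. That is a correct and relevant fact, but extremality does not pass to subsets, so it does not yield extremality of $X^H$, the closure of the single orbit-type stratum. The obstruction is intrinsic to the method: equivariance only shows that the flow cannot decrease isotropy, so it preserves each set $\pi(\mathrm{Fix}(K))$, but gives no control on the closure of an individual stratum. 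To prove the proposition one needs a different mechanism, for example the local criterion recalled after Proposition \ref{p:extremalisotropy} (a closed set is extremal once each tangent subcone is extremal), used inductively over tangent cones exactly as in the proof of Proposition \ref{p:extremalstrata}, or the original argument of \cite{perpet}.
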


Extremal sets survive the passage to Gromov--Hausdorff limits, and so for any extremal set $E$, and any point $p \in E$, there is a well defined tangent subcone $T_p E \subset T_p X$ which is also extremal. Conversely, if $E$ is a closed subset of $X$ such that $T_p E$ is extremal for each $p \in E$, then $E$ is an extremal subset.

\subsection{The Stability Theorem}

A crucial advance in the understanding of Alexandrov spaces was made by Perelman with his proof of the Stability Theorem \cite{perstab}. 
The author recommends the treatment by Kapovitch \cite{kapstab} for those who wish to learn more about this deep result. 

The statement of the theorem given here is a relative version of Perelman's original theorem. 
It was proved by Kapovitch for the case where only one extremal subset is under consideration, but as was pointed out by Searle and the author \cite{HS}, it is in fact true in greater generality. 

\begin{theorem}[Stability Theorem \cite{perstab, kapstab,HS}]\label{t:relstability}
 Let $X_i$ be a sequence of compact Alexandrov spaces of dimension $n$ with curvature uniformly bounded from below, converging to a compact Alexandrov space $X$ of the same dimension. 
 Let $\mathcal{E}_i = \{ E^{\alpha}_i \subset X_i \}_{\alpha \in A}$ be a family of extremal sets in $X_i$ indexed by a set $A$,  converging to a family of extremal sets $\mathcal{E}$ in $X$.
 
 Let $o(i) \co \nnn \to (0,\infty)$ be a function with $\lim_{i \to \infty} o(i) = 0$. Let $\theta_i\co  X \to X_i$ be a sequence of $o(i)$--Gromov--Hausdorff approximations. 
 
Then for all large $i$ there exist homeomorphisms $\theta'_i \co  (X, \mathcal{E}) \to (X_i, \mathcal{E}_i)$, $o(i)$--close to $\theta_i$.
 \end{theorem}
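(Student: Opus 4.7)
The plan is to adapt Kapovitch's proof of the single-extremal-subset stability theorem by inducting over the stratification induced by $\mathcal{E}$. Since finite intersections of extremal subsets are again extremal (Perelman--Petrunin), I would first enlarge the family $\mathcal{E}$ to be closed under intersection, obtaining for each $p \in X$ a well-defined minimal extremal set $E_{\min}(p) \in \mathcal{E}$ containing it. The strata of $X$ are then the sets of points with a common $E_{\min}$, partially ordered by inclusion, and the corresponding stratification on each $X_i$ converges to that on $X$.

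For the local step, near each point $p \in X$ I would build an admissible map $f_p \co U_p \to \rrr^n$ in Perelman's sense whose component functions are of the form $\dist(q_j, \cdot)$ or $\dist(E^{\alpha}, \cdot)$, chosen so that every $E^{\alpha} \in \mathcal{E}$ passing through $p$ is preserved by the corresponding gradient flows. Extremality of $E^{\alpha}$ immediately gives that $\dist(E^{\alpha}, \cdot)$ has flow preserving $E^{\alpha}$; tangential directions within $E^{\alpha}$ are probed by distance functions to points of $E^{\alpha}$ itself, whose flows also preserve $E^{\alpha}$. Existence of enough strainers of this adapted form is guaranteed by the fact that $T_p E^{\alpha}$ is extremal in $T_p X$ at every point, so the flag of tangent cones splits off directions adapted to each extremal set through $p$.

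Kapovitch's local argument then produces, for large $i$, homeomorphisms $U_p \to U_p^i \subset X_i$ close to $\theta_i$ and sending each $E^{\alpha} \cap U_p$ onto $E_i^{\alpha} \cap U_p^i$. The global homeomorphism $\theta_i'$ is assembled by Perelman's partition-of-unity and iterated-interpolation procedure: the interpolating flows are again built from the adapted admissible maps, so they continue to preserve every $E^{\alpha}$, and the extremality-respecting property survives the gluing.

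The main obstacle I foresee is compatibility near stratum boundaries: at a point $p$ lying in the closure of a deeper stratum, the admissible map chosen at $p$ must be consistent with the admissible maps used on the deeper stratum, so that the homeomorphisms obtained on inner strata match those obtained on $p$'s own neighborhood. I would manage this by a descending induction on stratum dimension, first constructing $\theta_i'$ on the smallest (deepest) extremal sets and then extending outward across transverse directions, exploiting at each step the extremality of the relevant tangent subcones to produce the transverse strainers needed. The remaining bookkeeping, namely that the final map is $o(i)$--close to $\theta_i$, follows from the same estimates as in the absolute case because at every step we only modify $\theta_i$ by displacements of order comparable to the GH-approximation error.
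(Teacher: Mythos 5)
You should first note that the paper does not actually prove Theorem \ref{t:relstability}: it is quoted from Perelman and Kapovitch \cite{perstab, kapstab}, together with the observation of \cite{HS} that Kapovitch's relative version (stated there for a single extremal subset) goes through for a family, so there is no in-paper argument to compare against. Your outline follows exactly that intended route---rerun Perelman/Kapovitch's local fibration-and-gluing machinery while tracking the extremal sets---so in spirit it is the standard proof. Two concrete steps, however, do not work as written. First, Perelman's admissible maps are built from distance functions to \emph{points} (composed with concave functions and summed); $\dist(E^{\alpha},\cdot)$ is not in this class and is in general not semiconcave, so you cannot simply insert it as a component of a framing and invoke the existing local theorems. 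In Kapovitch's argument the extremal sets are respected not by adapting the strainers in this way, but by showing that the whole apparatus (admissibility, regularity, the local fibration theorem, and the gluing/deformation step) can be carried out in the relative category: gradient flows of the semiconcave functions already in use preserve every extremal subset, and restrictions of admissible maps to extremal subsets remain regular in the appropriate relative sense. Your sketch delegates precisely this relative machinery to ``Kapovitch's local argument,'' which is where all the work lies.

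Second, closing $\mathcal{E}$ under intersections is harmless inside a single space (intersections of extremal sets are extremal), but the hypothesis only gives convergence $E^{\alpha}_i \to E^{\alpha}$; it does not follow that $E^{\alpha}_i \cap E^{\beta}_i$ converges to $E^{\alpha} \cap E^{\beta}$ --- one only knows that any sublimit of the intersections is an extremal set contained in the intersection of the limits, which may be proper (or the upstairs intersections may be empty while the limits meet). So the enlarged families need not satisfy the hypotheses of the theorem, the sets $E_{\min}(p)$ and the induced stratifications of the $X_i$ need not converge to those of $X$, and your descending induction over strata is not set up correctly as stated. The induction has to be organized, as in \cite{kapstab} and \cite{HS}, around the limit space's extremal stratification and lifted framings, with the relative local structure results doing the matching across strata, rather than around a family enlarged by intersections upstairs.
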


This result implies all the previously known finiteness results for manifolds, other than diffeomorphism finiteness in dimension four.
It also has a vital application in Alexandrov geometry.
Consider the construction of the tangent cone to an Alexandrov space by the convergence of the sequence obtained by rescaling the metric around a certain point.
By a non-compact version of Theorem \ref{t:relstability}, the local structure of the space is controlled by the tangent cone.

\begin{corollary}Let $X$ be an Alexandrov space, and let $p \in X$. Then for some $r_0 >0$, $B_r(p) \cong T_p X$ for all $r < r_0$. Furthermore, for small enough $r_0$ the homeomorphism can be chosen so that, for every extremal set $E$, $E \cap B_{r}(p)$ is mapped to $T_p E$.\end{corollary}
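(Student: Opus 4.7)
The plan is to reduce the corollary to the non-compact version of the relative Stability Theorem (Theorem \ref{t:relstability}) via the rescaling construction of the tangent cone. By definition, $T_p X$ arises as the pointed Gromov--Hausdorff limit of $(X, \tfrac{1}{r} d, p)$ as $r \to 0$, so the strategy is to view a small ball $B_r(p) \subset X$, after rescaling by $1/r$, as an almost-copy of a fixed ball in $T_p X$.

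Fix a target radius, say $R = 1$, and choose a sequence $r_i \to 0$. Set $X_i := (X, \tfrac{1}{r_i} d)$ with basepoint $p$; each $X_i$ is an Alexandrov space of dimension $n$ with curvature $\geq k r_i^2$, and $(X_i, p)$ converges to $(T_p X, o)$. Since extremality is scale-invariant, every extremal set $E \subset X$ remains extremal in $X_i$, and by the characterization recalled in the excerpt (a closed subset is extremal if and only if every tangent cone is extremal), the rescaled family $\{E\}$ converges to the family $\{T_p E\}$ of extremal subsets of $T_p X$. Applying the non-compact analogue of Theorem \ref{t:relstability} to the closed balls $\bar{B}_R(p) \subset X_i$ with this full family yields, for all large $i$, a homeomorphism $\bar{B}_R(p) \subset X_i \to \bar{B}_R(o) \subset T_p X$ that carries each $E \cap \bar{B}_R(p)$ onto $T_p E \cap \bar{B}_R(o)$. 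Unrescaling, this is a homeomorphism $\bar{B}_{R r_i}(p) \to \bar{B}_R(o)$ in the original metric on $X$.

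Because the argument works for every sufficiently small $r_i$, I extract a threshold $r_0 > 0$ such that for every $r < r_0$ the same scheme (with scaling factor $r$ in place of $r_i$) produces a homeomorphism $\bar{B}_r(p) \to \bar{B}_R(o)$ respecting extremal sets. Composing with the canonical radial dilation $\bar{B}_R(o) \cong T_p X$---which fixes $o$ and preserves each tangent subcone $T_p E$, since these are invariant under the cone dilations---produces the desired homeomorphism sending $E \cap \bar{B}_r(p)$ to $T_p E$.

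The main technical obstacle is the non-compact, relative version of the Stability Theorem applied to the entire family of extremal sets at once. Theorem \ref{t:relstability} as stated already accommodates an arbitrary index set $A$, and the non-compact version reduces to convergence on each closed ball of fixed radius about the basepoint, so the same proof carries through. The only remaining point to verify is that the family $\{E\}$, viewed in the rescaled $X_i$, really does converge to $\{T_p E\}$ in the Hausdorff sense required by the theorem; this is immediate from the pointed Gromov--Hausdorff convergence $X_i \to T_p X$ together with the definition of $T_p E$ as the limit of $E$ under the rescalings.
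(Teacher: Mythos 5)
Your proposal takes essentially the same route as the paper, which justifies this corollary in one line by applying a pointed, relative version of the Stability Theorem \ref{t:relstability} to the rescalings $(X,\tfrac{1}{r}d,p)\to T_pX$, with each extremal set converging to its tangent subcone. Two small repairs: the \emph{closed} ball $\bar{B}_R(o)$ is compact and so cannot be homeomorphic to the non-compact cone $T_pX$ --- use the open ball and the radial reparametrization $B_R(o)\cong T_pX$ (which preserves each $T_pE$) --- and the pointed stability theorem a priori yields open embeddings with some slack rather than a homeomorphism carrying the metric ball exactly onto the metric ball, so to get $B_r(p)$ mapped precisely onto $B_R(o)$ you should invoke the distance-respecting refinement in the spirit of Proposition \ref{p:localstab} (the distance from the vertex, respectively from $p$, is regular on the relevant annuli).
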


These small conical neighborhoods are extremely useful in the study of Alexandrov spaces, and so it will be convenient to make the following definition.

\begin{definition}
	An open subset $U$ of an Alexandrov space $X$ is called \emph{cone-like around $p$} if $p \in U$, and there is a homeomorphism $f \co U \to T_p X$ with $f(p)$ being the vertex of the cone and $f(E \cap U) = T_p E$ for each extremal set $E$.
\end{definition}

For the proof of Theorem A, it will also be necessary to require the stability homeomorphisms to behave in a particular manner near a point, or near an orbit of a group action. 

\begin{proposition}\label{p:localstab}Under the assumptions of Theorem \ref{t:relstability}, let $p \in X$ and let $p_i \in X_i$ converge to $p$. Then there is a small $r > 0$ such that for $0 < \delta < r$ and large $i$ the homeomorphisms $\theta'_i$ can be chosen to also respect the distance from $p$ in the annulus around $p$. More precisely, for all $q \in B_r(p) \setminus B_{\delta}(p)$, $\dist(p_i, \theta'_i (q)) = \dist(p,q)$. 
	
If each of the $X_i$ and $X$ admit an isometric action by compact Lie groups $G_i$ and $G$, and these actions form a convergent sequence, then for some subsequence the points $p_i$ and $p$ may be replaced with the orbits $G_i \cdot p_i$ and $G \cdot p$.\end{proposition}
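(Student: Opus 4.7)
The plan is to enhance Kapovitch's proof of Theorem \ref{t:relstability} by including the distance function $\dist(p,\cdot)$ among the admissible coordinates used to build the local trivializations, so that the resulting $\theta'_i$ is automatically approximately distance-preserving from $p_i$, with a last adjustment on the link factor promoting this to exact preservation.

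First I would choose $r > 0$ small enough that $B_r(p)$ is cone-like around $p$; in particular $\dist(p,\cdot)$ is regular in Perelman's sense on the punctured ball $B_r(p) \setminus \{p\}$, exhibiting this punctured ball as the total space of a fibration over $(0,r]$ whose fiber $\Sigma$ is homeomorphic to the space of directions $\Sigma_p X$. By the convergence and non-collapsing hypotheses together with a short rescaling argument, $B_r(p_i) \setminus \{p_i\}$ fibers analogously over $(0,r]$ with fiber $\Sigma_i$, and $\Sigma_i \to \Sigma$ in Gromov--Hausdorff sense. Applying Theorem \ref{t:relstability} at the level of the links, with the induced family of extremal substructures (the radial images of $E^\alpha \cap B_r(p)$), yields homeomorphisms $\Sigma \to \Sigma_i$ close to the fibrewise restriction of $\theta_i$. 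Using these to trivialize the fibrations, define $\theta'_i$ on the annulus $B_r(p) \setminus B_\delta(p)$ to act as the identity on the radial coordinate and as the link homeomorphism on the fiber coordinate. By construction, $\dist(p_i, \theta'_i(q)) = \dist(p,q)$ for $q$ in this annulus.

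To extend $\theta'_i$ to all of $X$, patch with the global stability homeomorphism from Theorem \ref{t:relstability} along a single level set $\dist(p,\cdot)^{-1}(r')$ with $\delta < r' < r$. Both candidate maps are $o(i)$-close on this level set, and the flexibility already present in Kapovitch's construction, specifically the ability to prescribe the homeomorphism on a compact region up to an $o(i)$ isotopy while preserving extremal sets, allows them to be joined into a single global homeomorphism.

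For the equivariant version, pass to a subsequence along which $G_i \cdot p_i \to G \cdot p$ in the Hausdorff topology. The distance function $\dist(G \cdot p,\cdot)$ is $G$-invariant and semiconcave, and the equivariant cone-like structure around an orbit (in the spirit of \cite{HS}) furnishes a $G$-equivariant fibration of a tubular neighborhood of the orbit, minus the orbit itself, over an interval of radii. The same construction, carried out $G_i$- respectively $G$-equivariantly, yields $\theta'_i$ with $\dist(G_i \cdot p_i, \theta'_i(q)) = \dist(G \cdot p, q)$. The main obstacle will be verifying that the patching step on the intermediate level set can be arranged to respect simultaneously the radial trivialization, all extremal sets, and the group action in the equivariant case; this should follow from applying the relative Stability Theorem with the level set itself treated as a prescribed boundary piece on which the trivializations already agree up to $o(i)$.
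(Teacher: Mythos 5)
Your construction hinges on ``applying Theorem \ref{t:relstability} at the level of the links,'' and that is where the argument breaks. The fibers $\Sigma_i$, $\Sigma$ of your radial fibrations are level sets of distance functions (or, up to homeomorphism, spaces of directions), and neither description puts them within reach of the Stability Theorem: level sets $\partial B_{r'}(p_i)$ are in general not Alexandrov spaces at all, let alone a non-collapsing sequence of dimension $n-1$ with a uniform lower curvature bound, and their intersections with extremal sets are not known to be extremal subsets of the level set; the alternative of using the spaces of directions $\Sigma_{p_i}X_i$ fails because spaces of directions do not converge to $\Sigma_p X$ under Gromov--Hausdorff convergence (smooth points can converge to a singular point). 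So the key homeomorphisms $\Sigma \to \Sigma_i$ close to $\theta_i$ are not delivered by the tools you cite. The paper avoids this entirely by staying inside the Perelman--Kapovitch machinery: on the annulus where $f=\dist(p,\cdot)$ is regular (and likewise $\dist(p_i,\cdot)$ for large $i$), one covers $X$ by framed sets whose framing maps all have $f$ as first coordinate; the local homeomorphisms produced by the proof then automatically respect $f$, and the gluing step can be performed respecting $f$ as well. For the equivariant statement the paper simply notes that the orbit-space distance functions have the required regularity and that their lifts are regular wherever the projections are, so the same argument applies---no separate equivariant tube construction is needed.

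Two further problems would remain even if the link step were repaired. First, the map you define as (identity on the radial coordinate)$\times$(link homeomorphism) is only as close to $\theta_i$ as your trivializations are metrically controlled; a trivialization obtained abstractly from ``regular $\Rightarrow$ fibration'' carries no $o(i)$-closeness, and producing controlled trivializations is essentially the framed-set argument you are trying to bypass. Second, patching with the global stability homeomorphism along a level set $\dist(p,\cdot)^{-1}(r')$ with $\delta < r' < r$ forces a modification in a collar inside the annulus $B_r(p)\setminus B_{\delta}(p)$, so the patched map no longer satisfies $\dist(p_i,\theta_i'(q))=\dist(p,q)$ on all of that annulus; the interpolation region must be pushed outside $B_r(p)$ (work on a larger ball where $f$ is still regular), which your write-up does not do.
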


A brief proof will be given now, but reference to \cite{kapstab} is advised for a full understanding of the details.

The proof of the stability theorem is carried out on a local basis. 
The space $X$ is covered by compact sets which are said to be \emph{framed}.

\begin{definition}A compact subset $P$ of an Alexandrov space $X$ is called $k$--framed if $P$ has a finite open cover $U_{\alpha}$ such that there are regular maps $f_{\alpha} \co U_{\alpha} \to \rrr^k$. In other words, $P$ is covered by fiber bundles over subsets of $\rrr^k$.\end{definition}

If a $k$--framed set in $X$ has a lift to $X_i$, then it is possible to use the framing to construct a homeomorphism between the framed sets.
These local homeomorphisms are all glued together to construct the global homeomorphism.
All of these results can be proved in parametrized versions, so that the homeomorphisms respect certain maps.

\begin{proof}[Proof of Proposition \ref{p:localstab}]For a suitable choice of $r$ the function $f(q)=\dist(p,q)$ is regular on $B_r(p) \setminus B_{\delta}(p)$ as well as on $B_r(p_i) \setminus B_{\delta}(p_i)$ for large $i$, depending on $\delta$.
Cover $X$ with framed sets so that for every framed set $P$ which intersects this annulus, $f$ is the first co-ordinate of every framing map $f_{\alpha}$. 
Then the local homeomorphisms between framed sets will all respect $f$ on the annulus.
The gluing of the local homeomorphisms can be carried out to respect $f$ on the annulus as well.

For the case of a group action, the orbit spaces $X_i / G_i$ converge to $X / G$, and the distance functions in the orbit spaces have the necessary regularity property. 
The lifts of these functions to $X_i$ and $X$ are regular over points where they are regular in the orbit space, and so the proof can be applied in this case also.\end{proof}

\section{Equivariant stability}

It is well known that a Gromov--Hausdorff convergent sequence can, after passing to a subsequence, be reduced to a Hausdorff convergent sequence in an enveloping metric space. This provides a more concrete object of study, adding some convenience. This result can be generalized to the equivariant setting, and so the slice theorem holds in the enveloping metric space. The proof of  Theorem A is based on a study of this enveloping space.

First, however, it is necessary to investigate the phenomenon of equicontinuous sequences of actions.
The additional assumption of equicontinuity is needed to establish the existence of a limit.

\begin{definition}\label{d:equicont}Let $X_i$ be a sequence of compact metric spaces, and let $G$ be a compact Lie group which acts by isometries on each of them by a map $\rho_i \co G \times X_i \to X_i$. Then the sequence of $G$--actions will be called \emph{equicontinuous} if, for some fixed metric on $G$, and for every $\epsilon > 0$, there is a $\delta > 0$ such that for every $g \in G, p \in X_i$ and for each $i$, $\rho_{i}^{-1} (B_{\epsilon}(\rho_i(g,p)))$ contains a ball of radius $\delta$ around $(g,p)$ in the product metric.\end{definition}

As will be seen in Lemma \ref{l:ambientconvergence}, equicontinuity implies convergence. 
The converse is a little trickier.
In fact, if the representatives from an equivalence class in $\meq$ are chosen in a particular way, even a constant sequence in $\meq$ might not be equicontinuous. 
For example, consider an action of $T^2$ on a metric space $X$. By changing the group by a sequence of automorphisms $\left(\begin{smallmatrix}1&k\\0&1\end{smallmatrix}\right) \in \mathrm{SL}(2,\zzz)$ with $k \to \infty$ a non-equicontinuous sequence of equivalent actions on $X$ is generated.

The following proposition shows that, at least in the case of Alexandrov spaces with a uniform lower curvature bound, it is always possible to find appropriate automorphisms rendering a convergent sequence equicontinuous.

The proof of this result relies on the center of mass construction from Grove--Petersen \cite{gphomotopy}, which allows for the construction of continuous maps from discrete ones.
In the review of this construction, bear in mind that the Riemannian manifold will be the compact Lie group $G$ with a bi-invariant metric.

Let $(M,g)$ be a complete Riemannian manifold, with $\dim M = n$, $\sec g \geq k$, $\vol (M,g) \geq v$ and $\diam (M,g) \leq D$.

A minimal $\mu$--net for $M$ is defined to be a set of points in $M$ such that the $\mu$--balls cover all of $M$ but the $\frac{\mu}{2}$--balls are disjoint.

It is shown in \cite{gphomotopy} that certain constants $r, R > 0$ and $N \in \nnn$ exist which depend only on $n, k, v$ and $D$, but not on the manifold $M$ itself, so that the following hold:	
\begin{enumerate}
	\item For any minimal $\mu$--net, a ball of radius $\mu$ will have non-empty intersection with at most $N$ of the $\mu$--balls centered on the members of the $\mu$--net.
	$N$ depends only on $n, k$ and $D$.
	
	\item Let $p_1, \ldots p_m \in M$,  and let $\lambda_1, \ldots, \lambda_m > 0$ be weights, so that $\Sigma \lambda_i = 1$. Let $\eta < r(1 + R + \cdots +R^{m-1})^{-1}$. If $\dist(p_i, p_j) < \eta$, $i, j = 1, \ldots , m$, then a center of mass $\mathcal{C}(p_1, \ldots p_m, \lambda_1, \ldots, \lambda_m)$ is defined which depends continuously on the $p_i$ and the $\lambda_i$, is unchanged on dropping any point with weight 0, and satisfies $\dist(\mathcal{C},p_i) < \eta (1 + R + \cdots +R^m)$ for each $i$.
\end{enumerate}

Write $K=1 + R + \cdots +R^{N}$.

\begin{proposition}\label{p:equicontinuity}
Let $G$ be a compact Lie group acting by isometries on a compact Alexandrov space $X$ of dimension $n$ and curvature bounded below by $k$. Suppose that a sequence of Alexandrov spaces $X_i$ with the same dimension $n$ and lower curvature bound $k$ is also acted on isometrically by $G$. If $(X_i,G)$ converges to $(X,G)$ in the equivariant Gromov--Hausdorff topology, then there is always an equicontinuous sequence of spaces equivalent to $(X_i,G)$ in $\meq$.	
\end{proposition}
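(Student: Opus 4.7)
The plan is to replace each $\rho_i$ by $\rho'_i(g,p) = \rho_i(\alpha_i^{-1}(g), p)$ for some group automorphism $\alpha_i \in \mathrm{Aut}(G)$, producing a sequence equivalent to the original in $\meq$ whose equicontinuity is inherited from the uniform continuity of the limit action $\rho\co G \times X \to X$ (which is uniformly continuous, being a continuous map on a compact domain). Concretely, the goal is to pick $\alpha_i$ so that the equivariant Gromov--Hausdorff approximations conjugate $\rho'_i$ to $\rho$ with vanishing error; the limit modulus of continuity then controls every $\rho'_i$ uniformly in $i$.

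Extract equivariant Gromov--Hausdorff approximations $(f_i, \phi_i, \psi_i)$ with error $\epsilon_i \to 0$. A short computation, using that the actions are isometric, shows that $\phi_i(g_1g_2)$ and $\phi_i(g_1)\phi_i(g_2)$ act within $O(\epsilon_i)$ of each other on the $\epsilon_i$-dense set $f_i(X) \subset X_i$. The Alexandrov hypotheses and the convergence of the orbit spaces rule out pathological behaviour of the (a priori $i$-dependent) ineffective kernels, so one can reformulate this as a genuine near-homomorphism property for $\phi_i$ measured in a fixed bi-invariant metric on $G$, with error $o(1)$. Next, regularize $\phi_i$ into a continuous map $\tilde\phi_i\co G \to G$ via the Grove--Petersen center of mass: choose a minimal $\mu$-net $\{g_j\}_{j=1}^m$ in $G$ and a subordinate partition of unity $\{\lambda_j\}$, and set
\[
    \tilde\phi_i(g) = \mathcal{C}\bigl(\phi_i(g_1), \ldots, \phi_i(g_m), \lambda_1(g), \ldots, \lambda_m(g)\bigr).
\]
The uniform combinatorial bound on net overlaps, combined with the near-homomorphism estimate, ensures the inputs to $\mathcal{C}$ are pairwise close enough for the construction to apply once $i$ is large.

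Now apply the near-homomorphism correction theorem of \cite{gkr} (as suggested in the acknowledgements) to pass from $\tilde\phi_i$ to an actual automorphism $\alpha_i \in \mathrm{Aut}(G)$ that is $C^0$-close to $\tilde\phi_i$. With $\rho'_i(g,p) = \rho_i(\alpha_i^{-1}(g), p)$, the chain
\[
    f_i(\rho'_i(g,p)) \approx \phi_i(\alpha_i^{-1}(g)) \cdot f_i(p) \approx g \cdot f_i(p)
\]
holds with error vanishing uniformly in $g$ and $p$, so for nearby $g, h \in G$ and any $p \in X_i$ the distance $\dist(\rho'_i(g,p), \rho'_i(h,p))$ is bounded by the modulus of continuity of $\rho$ on $X$ up to vanishing error, giving equicontinuity. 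The main obstacle is the passage from the operator-level near-homomorphism property (which only controls how $\phi_i(g_1g_2)$ and $\phi_i(g_1)\phi_i(g_2)$ act on a dense subset of $X_i$) to an intrinsic metric near-homomorphism on $G$, uniformly in $i$; because the actions need not be effective and the ineffective kernel can a priori drift with $i$, this step is where the Alexandrov structure---fixed dimension, uniform lower curvature bound, convergence of orbit spaces---together with the correction theorem of \cite{gkr}, is essential.
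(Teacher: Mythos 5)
Your overall template (center-of-mass regularization of the approximating group map, then the Grove--Karcher--Ruh correction, then twisting the actions by the resulting isomorphisms) is the same as the paper's, but you apply it to $\phi_i$, the map pointing towards the varying spaces $X_i$, and that choice of direction creates a genuine gap at exactly the step you flag as the main obstacle. Everything the equivariant approximations give you about $\phi_i$ is measured in the pseudo-metrics $d_{\rho_i}(g,h)=\sup_{x\in X_i}\dist(gx,hx)$: the values $\phi_i(g_j)$ at nearby net points are $d_{\rho_i}$-close, and the near-homomorphism defect of $\phi_i$ is small in $d_{\rho_i}$. But the center of mass and the GKR theorem need their input measured in a fixed bi-invariant metric $\sigma$ on $G$, and your final equicontinuity chain needs the reverse comparison ($\sigma$-closeness of $\alpha_i$ to $\tilde\phi_i$ forcing $d_{\rho_i}$-closeness) with constants independent of $i$. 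A uniform-in-$i$ two-sided comparison between $d_{\rho_i}$ and $\sigma$ is precisely what is unavailable --- it is essentially the statement being proved, and the paper's $T^2$ example with the reparameterizations $\left(\begin{smallmatrix}1&k\\0&1\end{smallmatrix}\right)$ shows it can fail even for effective actions, so the difficulty is not about drifting ineffective kernels. Neither GKR (which cannot manufacture its own hypothesis) nor the Alexandrov and orbit-space facts you invoke supply this comparison; in the paper those facts enter only at the very end, for a different purpose.

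The paper sidesteps the problem by regularizing and correcting $\psi_i$, the map pointing towards the fixed limit $(X,G)$: there all estimates land in the single fixed norm $d_\rho$ of the limit action, and since that action is effective and $G$ is compact, $d_\rho$ and $\sigma$ are comparable with moduli independent of $i$. That is what allows one to (i) run the net/center-of-mass construction (with nets whose mesh shrinks with $i$ --- note that your fixed $\mu$-net only puts $\tilde\phi_i$ within $O(\mu)$ of $\phi_i$, not within a vanishing error), (ii) convert the defect into a genuine $\sigma$-almost-homomorphism property, and (iii) retain the Gromov--Hausdorff approximation property after the GKR correction. Finally, you assert that the corrected map is an automorphism; GKR only produces a homomorphism, and the paper needs a separate argument (non-increase of dimension together with convergence of Hausdorff measure applied to $X/H_i\to X$) to show the kernel is eventually trivial. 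To repair your argument, carry out the construction on $\psi_i$ (or otherwise anchor every estimate to the fixed limit norm $\rho$) and add the kernel argument.
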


\begin{proof}

First fix a bi-invariant Riemannian metric $\sigma$ on $G$, and fix constants $N$ and $K$ as above, which are appropriate to the metric $\sigma$.
When a compact Lie group acts on a compact metric space by isometries, the action induces a continuous norm on the group. 
Let $\rho_i, \rho$ be norms on $G$ defined by $\rho_i(g) = \sup_{x \in X_i} \dist (x,gx)$ and $\rho(g) = \sup_{x \in X} \dist (x,gx)$. 
The norms induce distance functions by $d_{\rho} (g,h) = \rho(gh^{-1})$.
These distance functions are continuous with respect to the Riemannian metric.

Choose a sequence $\epsilon_i \to 0$ and $(f_i,\phi_i,\psi_i)$, a triple of functions $f_i \co X \to X_i$, $\phi_i \co G \to G$ and $\psi_i \co G \to G$ such that
\begin{enumerate}
	\item $f_i$ is an Gromov--Hausdorff $\epsilon_i$--approximation;
	\item if $g \in G, x \in X$, then $\dist(f_i(g x) , \phi_i(g) f_i(x) ) < \epsilon_i$; and
	\item if $g \in G, x \in X$, then $\dist(f_i(\psi_i(g) x) , g f_i(x) ) < \epsilon_i$.
\end{enumerate}

\begin{lemma}
	The function $\psi_i$ may be chosen to be continuous.
\end{lemma}

\begin{proof}
	
	Let $\nu_i > 0$ be such that $d_{\sigma}(g,h) < 2 \nu_i \implies d_{\rho_i}(g,h) < \epsilon_i$.
	Let $A_i$ be a minimal $\nu_i$--net in $(G,\sigma)$.
	Let $\eta_i > 0$ converge to 0, but let each $\eta_i$ be large enough that $d_{\rho}(g,h) < 4 \epsilon_i \implies d_{\sigma}(g,h) < \eta_i$, and choose a minimal $\eta_i$--net $B_i \subset (G,\sigma)$.
	
	Define a map $\alpha \co A_i \to B_i$ by mapping $p \in A_i$ to an element of $B_i$ nearest (in the $\sigma$ metric) to $\psi_i(p)$.
	If, for some $p,q \in A_i$, $d_{\sigma}(p,q) < 2 \nu_i$, then $d_{\sigma}(\alpha(p),\alpha(q)) < 3 \eta_i$.
	There is an induced map between the Euclidean spaces $\rrr^{A_i} \to \rrr^{B_i}$, where the coordinate for any $p \in B_i$ is obtained by summing the co-ordinates for each element of $\alpha^{-1}(p)$.
	
	Then a continuous map $\tilde{\psi_i} \co (G,\rho_i) \to (G,\rho)$ may be defined by composing maps $(G,\rho_i) \to \rrr^{A_i} \to \rrr^{B_i} \to (G,\rho)$.
	
	Let $A_i = \left\lbrace p_i^1 , \ldots, p_i^{\ell} \right\rbrace $ and choose smooth functions $f_i^j \co (G, \sigma) \to \rrr^{A_i}$, each having their support in the ball of radius $\nu_i$ around $p_i^j$, with $\Sigma_j f_i^j = 1$ and $f_i^j (p_i^k) \neq 0 \implies j=k$.
	The map $(G,\rho_i) \to \rrr^{A_i}$ is given by $g \mapsto f_i^j(g)$.
	Note that points in the image of this map have at most $N$ non-zero coordinates.
	
	The map from $\rrr^{A_i} \to \rrr^{B_i}$ is that induced by $\alpha$, and the map from $\rrr^{B_i} \to (G,\rho)$ is given by the center of mass construction.
	Note that in the domain points have at most $N$ non-zero coordinates, and the corresponding elements of $B_i$ are at pairwise distance at most $3 \eta_i$.
	
	To verify that $\tilde{\psi_i}$ will serve as part of the equivariant Gromov--Hausdorff approximation, it is enough to show that $d_{\rho}(\psi_i(g),\tilde{\psi_i}(g))$ is uniformly bounded over $G$, and that this bound is converging to zero.
	Let $p_i^1, \ldots, p_i^{N_2}$ be those elements of $A_i$ within $\nu_i$ of $g$ in the $\sigma$ metric. 
	Their images in $B_i$ under $\tilde{\psi_i}$ are then at most $\eta_i$ from $\psi_i(g)$ in the $\sigma$ metric.
	The point $\tilde{\psi_i}(g)$ is obtained from the elements of $B_i$ via the center of mass construction, and so is at most $3 \eta_i K$ from any of them. 
	This gives a global bound of $\eta_i(3K+1)$ for the difference between $\psi_i$ and $\tilde{\psi_i}$.
\end{proof}

Now, by the continuity of the $\rho$ distance function with respect to $\sigma$, it is clear that for large $i$ the (now assumed to be continuous) map $\psi_i$ will be an almost homomorphism in the sense of Grove--Karcher--Ruh \cite{gkr}. 
That is to say, for each $g,h \in G$, $d_{\sigma}(\psi_i(gh)\psi_i(h)^{-1},\psi_i(g)) \leq q$ for a fixed small $q$. 
By \cite[Theorem 4.3]{gkr}, there is then a continuous group homomorphism within $1.36q$ of $\psi_i$, and again by continuity of $\rho$, for large enough $i$ this means $\psi_i$ may be taken to be a homomorphism of Lie groups.

Finally, it is necessary to check that it is in fact an \emph{isomorphism} of Lie groups. Let $H_i$ be the kernel of $\psi_i$. Then $(X,H_i) \to (X,1)$ in $\meq$. It follows that $X/H_i \to X$ in the Gromov--Hausdorff topology. However, since dimension cannot increase in the limit \cite{bgp}, and since the Hausdorff measure must converge, $H_i$ must eventually be trivial.
\end{proof}

\begin{lemma}\label{l:ambientconvergence}
	Let $G$ be a compact Lie group and let $(X_i,G)$ be a sequence of $G$--spaces in $\meq$ which converges to $(X,\Gamma)$ in the equivariant Gromov--Hausdorff topology. Suppose further that this sequence of actions is equicontinuous. Then there is a subsequence $X_{i^j}$ such that there is a metric on $\mathcal{X} = X \coprod_j X_{i^j}$ that
	\begin{enumerate}
		\item restricts to the original metric on each of $X_{i^j}$ and $X$;
		\item is invariant with respect to an action of $G$, which restricts to the original action on each of the $X_{i^j}$; and
		\item induces a convergence of $X_{i^j}$ to $X$ in the Hausdorff metric on the closed subsets of $\mathcal{X}$;
	\end{enumerate} 
	and therefore $G$ is, after factoring out any ineffective kernel of its action on $X$, isomorphic to $\Gamma$.
\end{lemma}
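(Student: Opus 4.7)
The plan is to combine the standard disjoint-union construction for Gromov--Hausdorff limits with an equivariant upgrade that uses the equicontinuity hypothesis to produce a genuine Lie group homomorphism $\Psi \co G \to \Gamma$. First I would pass to a subsequence and fix equivariant Gromov--Hausdorff $\epsilon_i$-approximations $(f_i,\phi_i,\psi_i)$ from $(X,\Gamma)$ to $(X_i,G)$ with $\epsilon_i \to 0$ quickly (summably, so that triangle-inequality bookkeeping is harmless later). Then I would upgrade $\psi_i \co G \to \Gamma$ to a continuous map by the net-plus-center-of-mass procedure used inside the proof of Proposition \ref{p:equicontinuity}, applied now with $\Gamma$ in the role of target Lie group and both $G$ and $\Gamma$ equipped with bi-invariant Riemannian metrics. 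Equicontinuity makes the resulting continuous map an almost-homomorphism in the sense of Grove--Karcher--Ruh, and \cite[Theorem 4.3]{gkr} converts it into a genuine Lie group homomorphism $\Psi_i$ lying within $o(1)$ of $\psi_i$.

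Next, the space of continuous homomorphisms between fixed compact Lie groups is compact, so a subsubsequence yields a limit homomorphism $\Psi \co G \to \Gamma$, and $\Psi$ is surjective because the images of the $\psi_i$ are $\epsilon_i$-dense in $\Gamma$. I would then define a $G$-action on $X$ by $g \cdot x := \Psi(g) x$ and, using the approximation property together with equicontinuity, show that $\ker \Psi$ is exactly the ineffective kernel of this action; this yields the required isomorphism $G / \ker \Psi \cong \Gamma$.

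With this in hand, I would construct the ambient metric in the classical way. On each summand of $\mathcal{X}$ take the original metric, and for $x \in X$, $y \in X_{i^j}$ set
\[ d(x,y) := \inf_{z \in X}\bigl(d_X(x,z) + d_{X_{i^j}}(f_j(z),y)\bigr) + \epsilon_j, \]
extending between distinct $X_{i^j}$, $X_{i^k}$ by routing through $X$. The summability of $\epsilon_j$ ensures this defines a metric whose restriction to each summand is the original. To enforce $G$-invariance without disturbing the summand metrics, I would symmetrize over $G$:
\[ \bar d(p,q) := \sup_{g \in G} d(g \cdot p, g \cdot q). \]
Because $G$ acts by isometries on each $X_{i^j}$ (by hypothesis) and on $X$ (via $\Psi$), $\bar d$ and $d$ agree on each summand; equicontinuity bounds the expansion of the cross-distances, so Hausdorff convergence of $X_{i^j}$ to $X$ in $\bar d$ persists.

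The main obstacle is the production of $\Psi$. Without equicontinuity the $\psi_i$ can oscillate wildly (the $T^2$-example just before Proposition \ref{p:equicontinuity} illustrates the phenomenon), and the Grove--Karcher--Ruh smoothing fails; once equicontinuity is in place, however, the reparametrization of the $\psi_i$ via a Grove--Petersen center-of-mass and the Grove--Karcher--Ruh theorem do all the work. After $\Psi$ is produced, the metric construction and the symmetrization trick are routine.
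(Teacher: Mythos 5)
Your overall architecture (produce a group homomorphism first, then build the ambient metric and symmetrize) is genuinely different from the paper's proof, and it has two real gaps. First, everything in your construction of $\Psi$ presupposes that $\Gamma$ is a compact \emph{Lie} group: the Grove--Petersen center-of-mass needs a Riemannian manifold with uniform bounds, and the Grove--Karcher--Ruh theorem needs a bi-invariant Riemannian metric on the target. But in the lemma as stated, $(X,\Gamma)$ is only an element of $\meq$, so $\Gamma$ is merely a closed subgroup of the isometry group of a compact metric space; that it is a Lie group (indeed a quotient of $G$) is essentially the conclusion of the lemma, so your route is circular at the level of the statement. (It can be rescued in the setting of Theorem A, where $X$ is an Alexandrov space and Fukaya--Yamaguchi give that $\mathrm{Isom}(X)$ is Lie, but that is an extra input the lemma does not assume.) The paper avoids this entirely by never smoothing the maps $\psi_i$: it keeps the acting group equal to $G$ throughout, takes the limit of the action maps $\phi_i \co G \times X_i \to X_i$ directly with the Grove--Petersen--Arzel\`{a}--Ascoli theorem (this is precisely where equicontinuity is used), builds the metric on $\mathcal{X}$ by chaining almost-equivariant approximations $h_i \co X_i \to X_{i+1}$ with $(h_i,\id,\id)$, making each piece invariant by averaging over Haar measure, and only at the very end identifies $G$ modulo its ineffective kernel with $\Gamma$, because the constructed limit action is the equivariant Gromov--Hausdorff limit.

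Second, the step ``the space of continuous homomorphisms between fixed compact Lie groups is compact, so a subsubsequence yields a limit homomorphism'' is false as stated: the paper's own example of the automorphisms $\left(\begin{smallmatrix}1&k\\0&1\end{smallmatrix}\right)$ of $T^2$ is an unbounded, non-equicontinuous family of homomorphisms, so no such general compactness holds. What actually saves the limit extraction is the equicontinuity hypothesis: it gives a uniform modulus relating the fixed metric on $G$ to the action-induced norms on the $X_i$, hence uniform almost-equicontinuity of the $\psi_i$ (and of the nearby $\Psi_i$), and then Arzel\`{a}--Ascoli applies. Relatedly, you attribute equicontinuity to the wrong step: the almost-homomorphism estimate and the GKR smoothing follow from the approximation inequalities together with the comparability of metrics on the fixed target, with no equicontinuity needed (in Proposition \ref{p:equicontinuity} equicontinuity is the conclusion, not an ingredient of the GKR step); equicontinuity is needed exactly where you declared the matter automatic, namely in extracting the limit of the $\psi_i$ (or, in the paper's version, of the actions themselves) and in tying that limit action to the Hausdorff convergence inside $\mathcal{X}$. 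Your metric construction and the $\sup_{g \in G}$ symmetrization are fine in principle (and differ harmlessly from the paper's consecutive-gluing plus Haar averaging), but they only go through once a correct limit action of $G$ on $X$ is in hand.
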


\begin{proof}
	
	Fix Gromov--Hausdorff $\epsilon_i$--aproximations $f_i \co X_i \to X$ which witness the Gromov--Hausdorff convergence of the underlying metric spaces. Using these approximations, it is possible to define a limiting $G$--action on $X$ as follows.
	
	Consider the actions as continuous maps $\phi_i \co G \times X_i \to X_i$. Fixing a metric on $G$, the functions $\id \times f_i$ are Gromov--Hausdorff approximations showing the convergence of $G \times X_i$ to $G \times X$.  By the Grove--Petersen--Arzel\`{a}--Ascoli Theorem, one can extract from the equicontinuous subsequence $\phi_i$ a compact subsequence converging to a continuous map $\phi \co G \times X \to X$  \cite[Appendix]{gpjdg}. It is clear that this map is also an isometric action.
	
	Now pick approximations $g_i \co X \to X_i$ such that $g_i \circ f_i$ is close to the identity. Let $h_i \co X_i \to X_{i+1}$ be defined by $h_i = g_{i+1} \circ f_i$. Then $h_i$ is a Gromov--Hausdorff $5\epsilon_i$--approximation which is almost equivariant with respect to the action of $G$, and so $(h_i, \id, \id)$ can be used as an equivariant Gromov--Hausdorff $r_i$--aproximation. The quantity $r_i$ depends both on $\epsilon_i$ and on the rate of convergence of the $\phi_i$ to $\phi$.
	
	It is then possible to place a metric on the disjoint union $X_i \coprod X_{i+1}$ such that $\dist (x, h_i (x)) = r_i$ (see Burago, Burago and Ivanov \cite[Corollary 7.3.28]{BBI}).
	This metric can be rendered $G$-invariant by the usual averaging procedure, at a small cost---$h_i$ is now a Gromov--Hausdorff $3r_i$--approximation. The restriction of the metric to $X_i$ and to $X_{i+1}$ is unchanged.
	Let $d_i$ be the Hausdorff distance between $X_i$ and $X_{i+1}$ in this metric.
	
	Following Petersen \cite[p297]{petersen}, pass to a subsequence so that $d_i < 2^i$ for all $i$. Then, by gluing the metrics on each of the $X_i \coprod X_{i+1}$, a $G$--invariant metric on $\coprod_i X_i$ can be constructed, which restricts to the original metric on each $X_i$. This space can be completed to $\mathcal{X}$ in such a way that $\mathcal{X} = X \coprod_i X_i$, and $X_i$ converges to $X$ in the Hausdorff sense in $\mathcal{X}$.
	
	Since $\coprod_i X_i$ is dense in $\mathcal{X}$, the isometric $G$--action can be extended to an isometric action on all of $\mathcal{X}$, and the extension to $X$ is the limiting $G$-action constructed at the beginning of the argument.
	
	This action is, after factoring out any ineffective kernel, the limit of the $G$--actions in the equivariant Gromov--Hausdorff topology.
\end{proof}

It is now possible to proceed to the proof of Theorem A.
The proof relies on a result of the author from the general theory of transformation groups \cite{hcloseactions}, given here as Theorem \ref{t:cst}.
This result, along with the justification for its application here, is reviewed at the end of the paper, in Section \ref{s:tameness}.

\begin{proof}[Proof of Theorem A]
	
	\textbf{Envelop the convergence.}
	
	By Lemma \ref{l:ambientconvergence}, one may assume by passing to a subsequence that there is a $G$-invariant metric on $\mathcal{X} = X \coprod_i X_i$ which restricts to the original metrics and actions on each of the $X_i$, with $X_i$ converging to $X$ in the Hausdorff metric on the closed subsets of $\mathcal{X}$. Fix approximations $\theta_i \co X_i \to X$.
	
	Let $G'$ be the ineffective kernel of the $G$--action on $X$ (it will be shown later that this is trivial). Now $(X_i,G)$, converges to $(X,G/G')$ in the equivariant Gromov--Hausdorff topology. Let $\pi \co G \to G/G'$ be the projection map. Then equivariant Gromov--Hausdorff approximations from $(X_i, G)$ to $(X,\Gamma)$ which witness the convergence are given by the triple $(\theta_i, \pi, s)$ where $s$ is a (possibly discontinuous) section of $\pi$.
	
	\textbf{The cohomogeneity is constant.}
	
	By applying the slice theorem to $\mathcal{X}$, as a sequence of points $p_i \in X_i$ converges to $p \in X$ the isotropy group $G_p$ must be larger than $G_{p_i}$. In particular, the principal orbits of $X$ have dimension no greater than those of $X_i$. In other words, $\dim (X / G) \geq \dim (X_i / G)$.
	
	On the other hand, the orbit spaces $X_i/G$ converge to $X/G$ under a uniform lower curvature bound, so it follows that $X_i / G$ and $X / G$ have the same dimension, and are therefore homeomorphic by Perelman's Stability Theorem.
	
	\textbf{The radius of the tubes is bounded.}
	
	Here the term \emph{tube} is used in the transformation groups sense, meaning the image of a slice under translation by the group.
	
	Let $p \in X$, and let $\bar{p}$ be its image in $X/G$. As described by the author and Searle \cite[section 3.4]{HS}, a tube in an Alexandrov space around the orbit $G \cdot p$ can be constructed by choosing a strictly concave function $\bar{h}$ on a neighborhood $U$ of $\bar{p}$ which achieves its maximum at $\bar{p}$. This construction is due to Perelman \cite{permorse}.
	
	The gradient flow of $\bar{h}$ gives a retraction $\bar{r} \co U \to \bar{p}$.
	The function $\bar{h}$ lifts to a function $h$ on a neighborhood of $G \cdot p$.
	The gradient flow of $h$ then gives a $G$--invariant retraction $r$ onto $G \cdot p$, showing that neighborhood to be a tube around the orbit.
	
	By Perelman and Petrunin \cite[Lemma 4.3]{perpet}, the construction of $\bar{h}$ is such that strictly concave functions $\bar{h}_i$ exist on neighborhoods in $X_i / G$ converging to $h$.
	Let $\bar{p}_i$ be the maxima of the $\bar{h}_i$.
	Let $r$ be such that, for large $i$, $B_r(\bar{p}_i)$ is contained in the domain of concavity of $\bar{h}_i$.
	
	This establishes the existence of a sequence of points $p_i \to p$ such that there are tubes of a fixed radius $r$ around each $G \cdot p_i$ and $G \cdot p$. Clearly the orbits $G \cdot p_i$ are of the most singular type possible in the neighborhood.
	
	\textbf{The orbit-type survives passage to the limit.}
	
	It is claimed that for every subgroup $H \subset G$, after passing to a subsequence $X_i^H \to X^H$. 
	Recall that $X_i^H$ is a subset of $X_i/G$, the closure of the set of orbits with isotropy type $H$.
	
	Let $\bar{p}_i \in X_i^H$. Then there are points $p_i \in X_i$ above $\bar{p}_i$ which have isotropy containing $H$. Any accumulation point $p$ of the sequence $p_i$ is also fixed by $H$, and lies above some accumulation point of the $\bar{p}_i$.
	
	Next it is claimed that if, in fact, $p$ is fixed by some larger group $K$, then there is a sequence $q_i \to p$ of points in $X_i$ which are fixed by $K$.
	
	Fix $r$ so that the tube of radius $r$ around $G \cdot p$ can be approximated by tubes of radius $r$ about $G \cdot p_i$, with $p_i \to p$. 
	By Proposition \ref{p:localstab}, for large $i$ the tubes around $G \cdot p_i$ are homeomorphic to those around $G \cdot p$.
	Homeomorphism of the tubes implies homotopy equivalence of the orbits, so the orbits are all of the same dimension, and have the same number of components.
	In the case that $G$ is finite, this proves the claim.
	
	Consider a tube in the enveloping space $\mathcal{X}$ around $G \cdot p$, and fix for the remainder of the proof a decomposition of the tube into slices at each point of the orbit.
	After picking $p_i$ to lie in a slice at $p$, $G_{p_i}=L_i$ must be a subgroup of full dimension in $K$.
	
	Let $K_0$ be the identity component of $K$, and hence also of the $L_i$.
	Let $\Gamma = K/K_0$.
	Now it is clear that $X_i / K_0 \to X / K_0$, and this convergence is equivariant with respect to the action of $\Gamma$.
	
	Since $\Gamma$ fixes the image of $p$, $\bar{p} \in X/K_0$, there are points $\bar{q}_i \in X_i / K_0$ converging to $\bar{p}$ which are also fixed by $\Gamma$.
	As noted in the previous section of the proof, these points $\bar{q}_i$ are of the most singular type possible locally, and so they must correspond to fixed points of $K_0$, $q_i \in X_i$. 
	These $q_i$ then have isotropy type $K$ as required.
	\newpage
	\textbf{Construct the homeomorphisms.}
	
	Recall that the sets $X_i^H$ and $X^H$ are extremal subsets of the orbit space. 
	By the Stability Theorem, the convergence $X_i / G \to X/G$ inside $\mathcal{X}/G$ can then be used to establish homeomorphisms $\theta_i \co X/G \to X_i / G$ which carry $X^H$ to $X_i^H$ for every subgroup $H$.
	These $\theta_i$ are Hausdorff approximations in the space $\mathcal{X}/G$.
	
	Now consider the space $X/G$ as an abstract orbit space (see Definition \ref{d:abstract}). Let $f \co X/G \to \mathcal{X}/G$ be the obvious embedding of $X/G$ as the orbit space of $X \subset \mathcal{X}$.
	Let $f_i$ be the embedding $\theta_i \circ f$. Clearly $f_i$ converges to $f$.
	Now each of the $X_i$ is a $G$--space over $X/G$. 
	The convergent sequence of embeddings into $\mathcal{X}/G$ can be used to apply the Covering Sequence Theorem \ref{t:cst}, to obtain strong equivalence of the $X_i$, ie, equivariant homeomorphisms of $X_i$ with $X$ which descend to $\theta_i$.
	
	\textbf{Remove the subsequence.}
	
	Return to consideration of the original equicontinuous sequence $(X_i, G)$. If there is no $N_0$ such that, for all $n \geq N_0$, the space $(X_n, G)$ is equivariantly homeomorphic to the limit $(X,G)$, there would be a subsequence $(X_{i^j},G)$ of spaces converging to $(X,G)$, but none of which are equivariantly homeomorphic to $(X,G)$. However, by what has already been shown, that subsequence must itself have a subsubsequence which in fact is equivariantly homeomorphic to $(X,G)$ in the tail, and this would yield a contradiction.
\end{proof}

These arguments can easily be applied to pointed convergence of non-compact spaces in the case where the group fixes the basepoint.

\begin{corollary}Let $G$ be a compact Lie group and let $(X_i,p_i)$ be a sequence of pointed complete Alexandrov spaces of dimension $n$ and curvature bounded below by $k$. Let $G$ act isometrically on each of $X_i$, fixing $p_i$. Suppose the sequence converges to an action of $\Gamma$ on another $n$--dimensional complete pointed Alexandrov space $(X,p)$ in the equivariant Gromov--Hausdorff topology. Suppose further that, for every $r > 0$, this sequence of actions is equicontinuous on $B_r(p_i)$. 
	
Then for each $R,\epsilon>0$ and for large $i$ there are open equivariant embeddings $$\psi_i \co B_{R+\frac{\epsilon}{2}}(p) \to B_{R+\epsilon}(p)$$ covering $B_R(p_i)$. Furthermore, there is a subsequence such that the embeddings can be chosen to cover stability embeddings of the orbit spaces.\end{corollary}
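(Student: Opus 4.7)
The plan is to run the argument of Theorem A locally, on a slightly larger ball than the one we wish to cover, and then to promote the resulting stability homeomorphism to an open equivariant embedding. First I would establish a pointed analogue of Lemma \ref{l:ambientconvergence}: passing to a subsequence, there should be a $G$--invariant metric on $\mathcal{X} = X \coprod_i X_i$ restricting to the original metrics and actions, in which, for every $r>0$, the closed ball $\bar{B}_r(p_i)$ converges to $\bar{B}_r(p)$ in the Hausdorff topology on compact subsets of $\mathcal{X}$. The argument is essentially identical to the compact case: the basepoints are fixed by $G$ and, on each $\bar{B}_r(p_i)$, the actions are by hypothesis equicontinuous, so the Grove--Petersen--Arzel\`a--Ascoli theorem produces a continuous limit action on $\bar{B}_r(p)$ and the enveloping metric is then built ball by ball and glued. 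In particular this identifies $\Gamma$ with the quotient of $G$ by the ineffective kernel of the limit, so without loss we may assume $\Gamma = G$.

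Next, I would fix a radius $R_0 > R + \epsilon$ and work inside $\bar{B}_{R_0}(p)$. The four local steps in the proof of Theorem A---constancy of cohomogeneity, existence of uniform tube radii about singular orbits, persistence of each orbit type, and construction of extremal-set-preserving stability maps---are all of a local nature: each concerns a neighborhood of a single orbit or a relatively compact piece of the orbit space, and so they transfer verbatim once the enveloping convergence is known on balls of radius $R_0$. The conclusion is that for every closed subgroup $H \le G$ the extremal sets $X_i^H \cap B_{R_0}(p_i)/G$ converge to $X^H \cap B_{R_0}(p)/G$ inside $\mathcal{X}/G$.

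I would then apply the relative, non-compact form of Theorem \ref{t:relstability}, together with the annular refinement of Proposition \ref{p:localstab}, to the convergent sequence of orbit spaces $B_{R_0}(p_i)/G \to B_{R_0}(p)/G$, using the distance from $\bar{p}$ as the regular function that pins down the approximations on the annulus $\bar{B}_{R+\epsilon}(\bar{p}) \setminus B_R(\bar{p})$. This yields, for large $i$, open embeddings $\theta_i \co B_{R+\frac{\epsilon}{2}}(\bar{p}) \embed B_{R+\epsilon}(\bar{p}_i)$ covering $B_R(\bar{p}_i)$ and carrying each extremal subset to its counterpart. Feeding these $\theta_i$ into the Covering Sequence Theorem \ref{t:cst}, exactly as in the final step of the proof of Theorem A, should produce the desired equivariant embeddings $\psi_i$; the secondary conclusion, that the $\psi_i$ may be chosen to cover prescribed stability embeddings of orbit spaces, is simply the content of this application.

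The main obstacle will be the passage from a closed-ball convergence argument to one that outputs an \emph{open} embedding over an \emph{open} base: the Covering Sequence Theorem is naturally stated for closed orbit spaces, and applying it requires treating $\bar{B}_{R+\frac{\epsilon}{2}}(\bar{p})$ as a compact orbit space with controlled boundary behaviour, so that the abstract orbit-space data (isotropy, tubes, slices) extend continuously up to the frontier. This is precisely the role of Proposition \ref{p:localstab}: by choosing the $\theta_i$ to be exact radial isometries on the outer annulus, the lifted equivariant maps will respect the radial coordinate there and hence glue cleanly to honest open embeddings into $B_{R+\epsilon}(p_i)$. The conclusion ``for large $i$,'' rather than only along a subsequence, is then recovered by the subsequence-of-a-subsequence argument used at the end of the proof of Theorem A.
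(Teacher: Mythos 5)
Your proposal is correct and takes essentially the same route the paper intends: the paper gives no separate proof of this corollary beyond the remark that the arguments of Theorem A apply to pointed convergence when the group fixes the basepoint (so that convergence reduces to convergence of closed balls), and your write-up is exactly that adaptation — pointed ambient convergence via equicontinuity on balls, localization of the Theorem A steps, the annular control from Proposition \ref{p:localstab}, the Covering Sequence Theorem, and the subsequence-removal argument. The only detail worth noting is that the target of $\psi_i$ should be read as $B_{R+\epsilon}(p_i)$ (as in your orbit-space version), which is clearly the intended meaning of the statement.
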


The non-equivariant version of the stability theorem may be rephrased as follows: For every $X$ in the class of Alexandrov spaces of dimension $n$ with curvature bounded below by $k$, there is an $\epsilon = \epsilon (X,k)$ such that every space in the class within Gromov--Hausdorff distance $\epsilon$ of $X$ is homeomorphic to $X$.

Theorem A can be rephrased in the same manner, using Proposition \ref{p:equicontinuity}.

\begin{theorem}
Let $G$ be a compact Lie group acting by isometries on a compact Alexandrov space $X$ of dimension $n$ and curvature bounded below by $k$. Then there is some $\epsilon = \epsilon (X,G,k)$ such that any compact Alexandrov space of dimension $n$ and curvature bounded below by $k$ with an isometric $G$--action which is within equivariant Gromov--Hausdorff distance $\epsilon$ of $(X,G)$ is equivariantly homeomorphic to $(X,G)$.
\end{theorem}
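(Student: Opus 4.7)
The plan is to argue by contradiction, reducing directly to Theorem A after first passing to an equicontinuous representative of the sequence via Proposition \ref{p:equicontinuity}.

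Suppose no such $\epsilon = \epsilon(X, G, k)$ exists. Then I can extract a sequence $(Y_i, G) \in \meq$ of compact Alexandrov $G$--spaces of dimension $n$ with curvature bounded below by $k$, such that $(Y_i, G) \to (X,G)$ in the equivariant Gromov--Hausdorff topology, yet no $(Y_i, G)$ is equivariantly homeomorphic to $(X, G)$. Since $X$ is compact and the underlying metric spaces converge to $X$, the diameters $\diam(Y_i)$ are uniformly bounded, and the limit has dimension exactly $n$ by hypothesis, so no dimensional collapse occurs.

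The obstruction to invoking Theorem A at once is that the sequence $(Y_i, G)$ need not be equicontinuous: as noted after Definition \ref{d:equicont}, even a constant sequence in $\meq$ may fail to be equicontinuous if its representatives are chosen badly. Proposition \ref{p:equicontinuity} is exactly what is needed to circumvent this: it produces a sequence $(Y_i', G)$, equivalent to $(Y_i, G)$ in $\meq$ (obtained by twisting each $G$--action by an automorphism of $G$), which is equicontinuous and still converges equivariantly to $(X, G)$. Because equivalence in $\meq$ preserves equivariant homeomorphism type, none of the $(Y_i', G)$ is equivariantly homeomorphic to $(X, G)$ either.

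Theorem A now applies directly to the equicontinuous sequence $(Y_i', G)$, producing equivariant homeomorphisms $Y_i' \to X$ for all large $i$, in contradiction with the previous paragraph. I expect the only truly substantive step to be the invocation of Proposition \ref{p:equicontinuity}; the remainder is a contrapositive repackaging of Theorem A.
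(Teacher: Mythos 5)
Your proposal is correct and is essentially the paper's intended argument: the paper states this theorem as a direct rephrasing of Theorem A via Proposition \ref{p:equicontinuity}, which is exactly your contradiction argument (extract a convergent sequence, replace it by an equicontinuous equivalent sequence using Proposition \ref{p:equicontinuity}, and apply Theorem A). Your observation that equivalence in $\meq$ preserves the relevant notion of equivariant homeomorphism, and that no collapse occurs since the limit is the fixed $n$--dimensional space $X$, fills in the only details the paper leaves implicit.
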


\FloatBarrier

\section{Orbifolds}\label{ss:orbifolds}

Orbifolds were first introduced by Satake under the name \emph{V--manifolds} \cite{Satake}, as topological spaces locally modelled on a quotient of Euclidean space by a finite group. Some basic facts about orbifolds are reviewed here. The reader may refer to, among others, the book by Adem, Leida and Ruan \cite{alr} or Thurston's notes \cite{thurston} for further information.

\begin{definition}A smooth $n$--dimensional \emph{orbifold chart} over a topological space $U$ is a triple $(\tilde{U}, \Gamma_U, \pi_U)$ such that $\tilde{U}$ is a connected open subset of $\rrr^n$, $\Gamma_U$ is a finite group of smooth automorphisms of $\tilde{U}$ and $\pi_U \co \tilde{U} \to U$ is a $\Gamma_U$--invariant map inducing a homeomorphism $\tilde{U}/\Gamma_U \cong U$.\end{definition}

For convenience, a chart will sometimes be referred to as being over a point $p$. 
This will mean that the chart is over some neighborhood of $p$.

Let $U$ and $V$ be open subsets of a topological space $X$, and let $(\tilde{U}, \Gamma_U, \pi_U)$ and  $(\tilde{V}, \Gamma_V, \pi_V)$ be orbifold charts of dimension $n$ over $U$ and $V$ respectively.
The charts are called \emph{compatible} if, for every $p \in U \cap V$, there is a neighborhood $W$ of $p$ and an orbifold chart $(\tilde{W}, \Gamma_W, \pi_W)$ over $W$ such that there are smooth embeddings $\lambda_U \co \tilde{W} \embed \tilde{U}$ and $\lambda_V \co \tilde{W} \embed \tilde{V}$ with $\pi_V \circ \lambda_V = \pi_W$ and $\pi_U \circ \lambda_U = \pi_W$.

As usual, an orbifold atlas on a space $X$ will mean a collection of compatible charts covering $X$. Now the definition of an orbifold can be made.

\begin{definition}A smooth \emph{orbifold} of dimension $n$ is a paracompact Hausdorff space equipped with an atlas of orbifold charts of dimension $n$.\end{definition}

An orbifold homeomorphism (respectively diffeomorphism) is a homeomorphism of the underlying topological space which can locally be lifted to an equivariant homeomorphism (respectively diffeomorphism) of charts. Borzellino and Brunsden (see \cite{borzbrunsden}) have pointed out that this definition of an orbifold map is not sufficient for many purposes, though it is appropriate to the question currently under consideration. Four possible definitions of orbifold map are given there, of which this is the most na\"{i}ve notion, the \emph{reduced orbifold map}.

Let $X$ be an orbifold, let $p \in X$, and let $(\tilde{U}, \Gamma, \pi)$ be a chart over $p$ with $\pi(y) = p$.
The isotropy group of $y$ will be called the \emph{local group} at $p$, and will be written as $\Gamma_p$. 
It is uniquely defined up to conjugacy in $\Gamma$, and choosing a different chart does not change the isomorphism type of the group. 

In fact, one can always choose a linear chart over $p$ such that the group of automorphisms is isomorphic to $\Gamma_p$. By this is meant a chart of the form $(\rrr^n, \Gamma_p, \pi)$ where the action of $\Gamma_p$ is via a faithful orthogonal representation $\rho_p \co \Gamma_p \embed \on$. Such a chart will be referred to as a \emph{linear chart around $p$}. The representation is also uniquely determined up to isomorphism, and will be called the \emph{local action} at $p$. The differential of the action of $\Gamma_p$ at the origin of the chart is also isomorphic to $\rho_p$.

A \emph{Riemannian metric} on an orbifold can be given by fixing a finite atlas and a partition of unity with respect to the corresponding cover, and choosing Riemannian metrics on the charts which are invariant with respect to the finite group action. An orbifold equipped with a Riemannian metric is called a Riemannian orbifold. Once the metric on the orbifold is given it can be lifted to the maximal atlas in a canonical manner. The various notions of curvature at points of an orbifold can then be defined by reference to the curvature of the charts.

It is straightforward to see that an orbifold with sectional curvature $\geq k$ is also an Alexandrov space with curvature $\geq k$.
The tangent cone at any point of an orbifold is then well-defined, and coincides with the usual notion of tangent space for orbifolds.
The notion of an extremal set now finds a very natural application in orbifolds.

\begin{proposition}\label{p:extremalstrata}Let $X$ be an orbifold of dimension $n$, $\Gamma$ a finite group, and $\rho \co \Gamma \embed \on$ a linear representation of $\Gamma$. Let $X^{\rho}$ be the closure of all points with local action $\rho$. Then $X^{\rho}$ is an extremal set of $X$.\end{proposition}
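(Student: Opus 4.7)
The plan is to verify the criterion stated just before the Stability Theorem subsection: since $X^\rho$ is closed by definition, it is enough to show that $T_p X^\rho$ is extremal in $T_p X$ for every $p\in X^\rho$. I will establish this locally by reducing, inside a single orbifold chart about $p$, to a finite union of instances of Proposition \ref{p:extremalisotropy}.

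Fix $p\in X^\rho$ and a linear chart $(\rrr^n,\Gamma_p,\pi_p)$ with representation $\rho_p\co \Gamma_p \embed \on$, so that $T_pX$ is canonically isometric to $\rrr^n/\Gamma_p$. The key bookkeeping is that if $\tilde q\in\rrr^n$ has $\Gamma_p$-isotropy $H$, then the local action of the orbifold at $q=\pi_p(\tilde q)$ is isomorphic, as an orthogonal representation, to $\rho_p|_H$ acting on $T_{\tilde q}\rrr^n=\rrr^n$. Let $\mathcal{H}$ be a (finite) choice of one representative from each conjugacy class of subgroups $H\subseteq\Gamma_p$ with $\rho_p|_H\cong\rho$. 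Then the points near $p$ whose local action equals $\rho$ are exactly the $\pi_p$-images of points whose $\Gamma_p$-isotropy is conjugate to some $H\in\mathcal{H}$, and on passing to closures a neighborhood of $p$ in $X^\rho$ coincides via $\pi_p$ with
\[
\bigcup_{H\in\mathcal{H}} (\rrr^n/\Gamma_p)^H,
\]
in the notation of Proposition \ref{p:extremalisotropy} applied to the isometric $\Gamma_p$-action on $\rrr^n$.

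Each summand $(\rrr^n/\Gamma_p)^H$ is extremal by Proposition \ref{p:extremalisotropy}, and finite unions of extremal subsets are again extremal, since the flow along the gradient of $\dist(q,\cdot)$ preserves each summand for every basepoint $q$. Moreover, each summand is the image in $\rrr^n/\Gamma_p$ of a union of linear subspaces through the origin, so it is a metric cone at $\pi_p(0)$ and equals its own tangent cone there. This identifies $T_p X^\rho$ with the displayed union inside $T_p X=\rrr^n/\Gamma_p$ and shows it is extremal, so the criterion gives the conclusion.

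The main place requiring care will be the identification in the middle paragraph: "local action $\rho$" is only defined up to isomorphism of orthogonal representations, so when $p$ has strictly larger local group than $\Gamma$ the set $\mathcal{H}$ can consist of several mutually non-conjugate subgroups of $\Gamma_p$, and every one of them must be accounted for when describing $X^\rho$ inside the chart. Once this local model is pinned down, extremality of the union falls out of Proposition \ref{p:extremalisotropy} and the tangent-cone criterion with nothing more to check.
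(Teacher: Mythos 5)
Your proof is correct, but it follows a genuinely different route from the paper's. The paper argues by induction on the dimension: for $p \in X^{\rho}$ the tangent cone is the cone over the $(n-1)$--dimensional spherical orbifold $S^{n-1}/\Gamma_p$, the inductive hypothesis makes the corresponding stratum there extremal, the (closure of the) cone over it is then extremal in $T_pX$, and the criterion ``closed set with extremal tangent subcones is extremal'' finishes; Proposition \ref{p:extremalisotropy} is never invoked. You bypass the induction entirely by identifying $T_pX^{\rho}$ inside the linear model $\rrr^n/\Gamma_p$ as the finite union $\bigcup_{H\in\mathcal{H}}(\rrr^n/\Gamma_p)^H$ over subgroups realizing $\rho$, and then quoting the Perelman--Petrunin result (Proposition \ref{p:extremalisotropy}) for the orthogonal $\Gamma_p$--action on $\rrr^n$ together with the (definition-level) fact that finite unions of extremal sets are extremal. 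Your bookkeeping point --- that several mutually non-conjugate subgroups of $\Gamma_p$ may all restrict to $\rho$ and every conjugacy class must be included --- is exactly right, and it makes explicit what the paper compresses into ``points having isotropy isomorphic to $\rho$''. What each approach buys: yours is non-inductive and reduces the orbifold statement cleanly to the equivariant one, at the price of importing the deeper Proposition \ref{p:extremalisotropy}; the paper's induction through spaces of directions stays self-contained modulo standard facts about cones over extremal sets. The one step where you should add a sentence is the identification of the tangent subcone: the chart homeomorphism $U \cong \rrr^n/\Gamma_p$ is not an isometry, so one should observe that each $\mathrm{Fix}(H)$ is a smooth (totally geodesic for the invariant metric) submanifold through the lift of $p$ whose tangent space is again $\mathrm{Fix}(H)$, hence it blows up to the corresponding linear subspace in $T_pX$; this is the same level of detail the paper itself suppresses in ``the closure of the cone on this set is $T_pX^{\rho}$'', so it is a refinement rather than a gap. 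Note also that a summand $(\rrr^n/\Gamma_p)^H$ may be empty (when $\mathrm{Fix}(K)=\mathrm{Fix}(H)$ for some $K\supsetneq H$), which is harmless since the empty set is extremal.
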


\begin{proof}The result is clear where $n = 1$. Let $p \in X^{\rho}$ and consider the local action at $p$ by $\Gamma_p$. The tangent cone at $p$ is the cone on the quotient of the unit sphere by $\Gamma_p$. Consider the image of those points in the unit sphere having isotropy isomorphic to $\rho$. The closure of the cone on this set is $T_p X^{\rho}$ and by induction it is extremal in $T_p X$.  Since $X^{\rho}$ is closed, it is extremal.\end{proof}

The following lemma now shows that a linear chart around $p$ can be extended over any cone-like set around $p$. 

\begin{lemma}\label{l:chart}Let $X$ be an orbifold, and let $p \in X$. Let $U$ be a cone-like set around $p$. Then there is a linear chart over $U$ around $p$.\end{lemma}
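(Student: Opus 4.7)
The plan is to exploit the existing linear chart around $p$ to construct the desired chart over all of $U$. By the orbifold definition, there is a linear chart $(\rrr^n, \Gamma_p, \bar\pi)$ around $p$, defined over some open neighborhood $V$ of $p$; shrinking if necessary, I assume $V \subset U$ and $V$ is itself cone-like. The map $\bar\pi$ descends to a homeomorphism $\rrr^n/\Gamma_p \cong V$, and composing this with the radial identification $V \cong T_p X$ (coming from cone-likeness of $V$) identifies the tangent cone $T_p X$ with $\rrr^n/\Gamma_p$, where $\Gamma_p$ acts by the orthogonal representation $\rho_p$. Under this identification the cone-like homeomorphism becomes $f\co U \to \rrr^n/\Gamma_p$.

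Define $\pi_U := f^{-1}\circ \pi$, where $\pi\co\rrr^n \to \rrr^n/\Gamma_p$ is the standard quotient. This is a $\Gamma_p$-equivariant continuous surjection inducing a homeomorphism $\rrr^n/\Gamma_p \cong U$, so $(\rrr^n, \Gamma_p, \pi_U)$ is the candidate linear chart. Replacing $f$, if necessary, with a cone-like homeomorphism in its class which agrees with $\bar\pi$ on $V$ (such adjustments are possible because cone-like homeomorphisms enjoy enough flexibility near $p$), compatibility of $\pi_U$ with $\bar\pi$ over $V$ is automatic.

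For compatibility at points $q \in U \setminus \{p\}$, pick $\tilde q \in \pi_U^{-1}(q)$ with isotropy $\Gamma_{\tilde q} \subset \Gamma_p$ under the linear action. Proposition \ref{p:extremalstrata}, together with the extremal-set-preserving property of $f$, forces $\Gamma_{\tilde q}$ to be conjugate in $\on$ to the local group $\Gamma_q$ via an equivalence of orthogonal representations: the extremal strata $X^\rho$ record exactly the isomorphism class of the local representation, and $f$ sends each such stratum to the corresponding stratum in $\rrr^n/\Gamma_p$. A small $\Gamma_{\tilde q}$-invariant ball $\tilde W$ around $\tilde q$, together with the resulting identification of representations, produces a common refinement chart between $\pi_U|_{\tilde W}$ and the given linear chart at $q$, verifying compatibility.

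The main obstacle is controlling local groups at $q \in U \setminus \{p\}$: the cone-like homeomorphism $f$ is only topological, so smooth compatibility of $\pi_U$ with the chart at $q$ is not automatic. This is resolved by Proposition \ref{p:extremalstrata}, which allows one to extract the matching of local representations purely from $f$'s preservation of extremal strata, thereby reducing the smooth compatibility question to a representation-theoretic identification solved pointwise.
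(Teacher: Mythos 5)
There is a genuine gap at exactly the point you flag as the ``main obstacle,'' and the proposed resolution does not close it. Your candidate chart is $(\rrr^n,\Gamma_p,\pi_U)$ with $\pi_U=f^{-1}\circ\pi$, where $f$ is only a homeomorphism. But compatibility of orbifold charts is defined through \emph{smooth} embeddings: at $q\in U$ one needs a chart $(\tilde W,\Gamma_W,\pi_W)$ and a smooth embedding $\lambda\co\tilde W\embed\rrr^n$ with $\pi_U\circ\lambda=\pi_W$, i.e.\ $\pi\circ\lambda=f\circ\pi_W$. On the regular part, $\pi$ and $\pi_W$ are local diffeomorphisms onto their images (in the smooth structure coming from the atlas), so any such $\lambda$ is forced to be a local lift of $f$; it can only be smooth if $f$ itself is smooth there, which a cone-like (stability) homeomorphism is not. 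Knowing from Proposition \ref{p:extremalstrata} that $f$ matches the strata, and hence that the local group $\Gamma_{\tilde q}$ is abstractly isomorphic (even as an orthogonal representation) to $\Gamma_q$, only says the two \emph{topological} local models agree; it does not produce the required smooth embedding commuting with the projections. So your construction yields a topological orbifold chart, not a chart compatible with the smooth atlas. This matters for how the lemma is used in Theorem B: the chart domains $\cali$ must carry the lifted Riemannian metric (so that they are Alexandrov spaces of curvature $\geq k$ and Theorem A applies to their equivariant Gromov--Hausdorff limits), and the metric only lifts through a chart compatible with the given structure.

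The paper avoids this by never pushing $\pi$ through $f$. Instead it uses $f$ only combinatorially: it covers $U$ by ranges of linear charts from the maximal atlas whose $f$-images are also ranges of linear charts in $T_pX$, takes the standard chart $\rrr^n\to T_pX$, which is assembled by gluing the small charts $\tilde V_i$ over the $f(U_i)$ (with $[\Gamma_p:N(\Gamma_i)]$ copies of each), and then glues the corresponding atlas charts $\tilde U_i$ over the $U_i$ by the same instructions. The resulting $\tilde U$ is built entirely out of smooth atlas charts and smooth gluings, so compatibility (and hence the metric lift) is automatic, while $f$ only dictates which charts to use and how to assemble them. If you want to salvage your approach you would have to upgrade $f$ to a homeomorphism that is smooth in the orbifold sense, which is not available; the gluing argument is the way around that.
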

\begin{proof}
	Consider the differential of the local action of $\Gamma_p$ on $\rrr_n$.
	The quotient of this action is the tangent cone at $p$, $T_p X$.
	
	Let $f \co U \to T_p X$ be a homeomorphism carrying each extremal set $E$ in $U$ to $T_p E$.
	Note that because $U$ is cone-like, $f$ preserves the local action at every point.
	
	Using a maximal atlas, cover $U$ by the ranges of all possible linear charts, $\{U_{\kappa}\}_{\kappa \in K}$. 
	Discard any $U_{\kappa}$ such that $f(U_{\kappa})$ is not the range of a linear chart in $T_p X$.
	
	Observe that this reduced family still covers $U$. 
	Suppose some $q \in U$ is not in any element of the reduced family. 
	Then for every $\kappa \in K$ such that $q \in U_{\kappa}$, $f(U_{\kappa})$ is not a linear chart.
	But $f(q)$ is covered by \emph{some} linear chart, and the intersection $W$ of the range  of this chart with $f(U_{\kappa})$ is also covered by a linear chart.
	Then because $f^{-1}(W) \subset U_{\kappa}$ it too is covered by a linear chart.
	It follows that $f^{-1}(W)=U_{\lambda}$ for some $\lambda \in K$, and is in the reduced family.
	
	Select a countable subcover, $U_1, U_2, \ldots$, and write $V_i$ for $f(U_i)$.
	Let $\Gamma_i$ be the local group acting on the charts $U_i$ and $V_i$.
	The charts $\tilde{V}_1, \tilde{V}_2, \ldots$ can be glued together to construct a chart over all of $T_p X$.
	The gluing requires $[\Gamma_p : N(\Gamma_i)]$ copies of $\tilde{V}_i$.
	The manner of this gluing gives a set of instructions which allows one to glue the charts $\tilde{U}_1, \tilde{U}_2, \ldots$ together to obtain the desired chart $\tilde{U}$.
	
	Since this chart is built by gluing together charts from the orbifold atlas, it is compatible with the atlas.
\end{proof}

By the Stability Theorem \ref{t:relstability}, $\o$ contains only finitely many topological types. To prove Theorem B, it is therefore sufficient to prove the following.

\begin{theorem}Let $X$ be a compact topological space. Then, up to orbifold homeomorphism, there are only finitely many orbifold structures on $X$ which belong to $\o$.\end{theorem}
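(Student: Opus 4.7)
My plan is to reduce Theorem B to Theorem A by passing from orbifolds to their orthonormal frame bundles. To each Riemannian orbifold $\mathcal{X}$ in the class $\o$ I would associate its orthonormal frame bundle $F\mathcal{X}$: a smooth manifold of dimension $n(n+1)/2$ carrying an effective isometric right action of $\on$ whose orbit space, with the induced quotient metric, is $\mathcal{X}$. I would equip $F\mathcal{X}$ with the canonical Riemannian metric built from the Levi--Civita connection of $\mathcal{X}$ together with a suitably rescaled bi-invariant metric on $\on$ along the vertical fibres.

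The first technical step is to verify that the family $\{F\mathcal{X} : \mathcal{X}\in\o\}$ enjoys uniform geometric bounds depending only on $k, D, v, n$. An O'Neill-type computation for the submersion $F\mathcal{X}\to\mathcal{X}$, with appropriate scaling of the vertical metric, yields a lower sectional curvature bound $k' = k'(k,n)$; the diameter is controlled by $D$ and $\diam \on$; and the volume is bounded below by $\vol(\on)\cdot v / N$, where $N = N(k,v,n)$ is a uniform upper bound on the order of local groups across the class. Such an $N$ follows from Bishop--Gromov volume comparison applied inside a linear chart: a local group of order $m$ forces the volume of a small ball in $\mathcal{X}$ to drop by the factor $m$ relative to its comparison model.

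Now suppose given a sequence $\mathcal{X}_i$ of orbifold structures on $X$ in the class. The frame bundles $F\mathcal{X}_i$ are Riemannian $\on$-manifolds of fixed dimension satisfying the uniform bounds above, so by Gromov precompactness and the lower volume bound a subsequence converges in the Gromov--Hausdorff topology to a compact Alexandrov space of full dimension $n(n+1)/2$. A further subsequence converges in the equivariant Gromov--Hausdorff topology to some pair $(Y,\Gamma)$, and Proposition \ref{p:equicontinuity} allows us to assume the sequence of $\on$-actions is equicontinuous after composition with suitable automorphisms. Theorem A then delivers $\Gamma\cong\on$ and $\on$-equivariant homeomorphisms $F\mathcal{X}_i\to Y$ for all large $i$. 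Any such equivariant homeomorphism descends to an orbifold homeomorphism of the orbit spaces, because an orbifold chart around $p\in\mathcal{X}$ can be recovered from an $\on$-slice at a frame over $p$, on which the stabiliser of that frame acts linearly as the local group. Thus a subsequence of the $\mathcal{X}_i$ consists of pairwise orbifold-homeomorphic members, and finiteness follows by the same subsequence-of-subsequence contradiction used at the end of the proof of Theorem A.

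I expect the main obstacles to lie at the two ends of the argument. First, the O'Neill-type lower curvature bound on $F\mathcal{X}$ requires care in the choice of rescaling of the vertical metric and a careful tracking of the A-tensor contributions in the presence of orbifold singularities; the accompanying uniform bound on the order of local groups also has to be set up cleanly. Second, the descent from equivariant homeomorphism of frame bundles to orbifold homeomorphism of orbifolds, while conceptually natural, must be handled carefully because Theorem A only provides equivariance up to an automorphism of $\on$, so one must verify that this ambiguity is either compatible with the orbifold structure or can be absorbed by a further post-composition.
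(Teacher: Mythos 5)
Your reduction to Theorem A via orthonormal frame bundles founders at the first technical step, and the failure is not a matter of care in the rescaling: the claimed uniform lower bound $k'=k'(k,n)$ on the sectional curvature of $F\mathcal{X}$ does not exist for the class $\o$. For the connection metric, the $A$-tensor of the submersion $F\mathcal{X}\to\mathcal{X}$ is (essentially) the curvature form of the Levi--Civita connection, so $|A|$ is comparable to $|R|$, and the O'Neill formula gives horizontal sectional curvatures of the form $\sec_{\mathcal{X}}-3t^{2}|A_{X}Y|^{2}$ for a vertical scale $t$. Since $\o$ imposes no upper curvature bound, $|R|$ is unbounded over the class (already for smooth members: round spheres with increasingly sharp bumps lie in $\mathcal{M}^{\cdot, D, \cdot}_{k, \cdot, v}(2)$), so for any \emph{fixed} $t$ the frame bundles have sectional curvature unbounded below, and without a uniform lower curvature (or Ricci) bound you do not even get Gromov precompactness of the family $F\mathcal{X}$, let alone the hypotheses of Theorem A. If instead you shrink $t$ space by space to absorb $|R|$, then along your sequence $t_i\to 0$, the fibres collapse, the limit cannot have full dimension $n(n+1)/2$, and Theorem A (which requires non-collapse) is again unavailable. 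This is exactly why Fukaya's frame-bundle finiteness theorem assumes two-sided curvature bounds, and why removing the upper bound forces a different strategy. (Two smaller points: Proposition \ref{p:equicontinuity} is stated for sequences $(X_i,G)\to(X,G)$ with the same group acting on the limit, so it does not apply verbatim to an arbitrary equivariant limit $(Y,\Gamma)$ of the frame bundles; and the descent from an $\on$-equivariant homeomorphism of frame bundles to an orbifold homeomorphism, while morally right via the identification of slices at frames with linear charts, is a step that still needs to be written out, including the automorphism ambiguity you flag.)

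The paper avoids the frame bundle entirely and stays on the orbifold, where the only curvature input needed is the lower bound it actually has. It converges the orbifolds themselves (as Alexandrov spaces), uses Stanhope's uniform bound on local group orders to list the finitely many possible local actions, observes via Proposition \ref{p:extremalstrata} that the closures of the strata with a given local action are extremal sets, and applies the relative Stability Theorem \ref{t:relstability} to get stratum-preserving homeomorphisms $h_i\co Y\to O_i$. The equivariant input (Theorem A) is then applied not to a global $\on$-manifold but to the sequence of \emph{linear charts} over cone-like sets (Lemma \ref{l:chart}), i.e.\ to finite group actions on pointed Alexandrov spaces, which inherit the lower curvature bound and for which non-collapse and equicontinuity are automatic; the resulting equivariant homeomorphisms of charts are then adjusted to cover the $h_i$, showing the $h_j\circ h_i^{-1}$ are orbifold homeomorphisms. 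If you want to salvage your outline, you would need to replace the global frame-bundle step by some localization of this kind; as written, the uniform geometry of $F\mathcal{X}$ is the missing ingredient and it is genuinely false in this class.
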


\begin{proof}
	Aiming for a contradiction, let $O_i$ be a sequence of orbifolds in $\o$, all of which have underlying topological space $X$, and no two of which are orbifold homeomorphic. 
	By compactness of $\a$, a subsequence of $O_i$ converges in the Gromov--Haudorff sense to some $Y \in \a$ which also has underlying space $X$. 
	Abusing notation, the subsequence will still be written as $O_i$.
	Since there will be many more instances of passing to subsequences, this abuse of notation will be repeated throughout the proof.
	
	By Stanhope \cite{stan} there is a uniform upper bound on the order of the local group of a point in $\o$. 
	Recall a finite group has only finitely many linear representations in a given dimension.
	It follows that all the possible local actions up to isomorphism can be listed by $\rho_{\ell} \co G_{\ell} \to \mathrm{GL}(n)$, for ${\ell} = 1 , \ldots , m$ where $m$ is some finite number.
	Let $E_i^{\ell}$ be $O_i^{\rho_{\ell}}$, the closure of the subset of $O_i$ with local group action isomorphic to $\rho^{\ell}$.
	By Proposition \ref{p:extremalstrata} the $E_i^{\ell}$ are extremal sets.

	Passing to a subsequence $m$ times if necessary, one may assume that each sequence $E_i^{\ell}$ converges to an extremal subset $E^{\ell} \subset Y$. 
	Now, by the Stability Theorem \ref{t:relstability}, there are homeomorphisms $h_i \co Y \to O_i$ which are Gromov--Hausdorff approximations and carry each of the $E^{\ell}$ onto the $E_i^{\ell}$. 
	
	To prove the result, it is now sufficient to show that $h_{ij} \co O_i \to O_j$ given by $h_{ij} = h_j \circ h_i^{-1}$ is an orbifold homeomorphism.
	
	Let $\pal$ be a set of points in $Y$ such that $Y$ is covered by cone-like metric balls $\ual$ centered at $\pal$. 
	Then the sets $h_i (\ual)$ are also cone-like around $\pali = h_i (\pal)$, and cover $O_i$. 
	Denote these sets by $\uali$.
	
	By Lemma \ref{l:chart} each $\uali$ is covered by a chart $(\cali,\Gamma_{\pali},\pi_{\uali})$. 
	By passing to a subsequence, we may assume that the $\cali$ form a convergent sequence in the pointed equivariant Gromov--Hausdorff topology, converging to some object $(\cal,\Gamma_{\pal}) \in \mathcal{M}^c_{eq}$.
	
	Now, by Theorem A, $\cali$ and $\cal$ are equivariantly homeomorphic by some $F_i \co \cal \to \cali$. 
	The $F_i$ induce homeomorphisms  $f_i \co \cali / \Gamma_{\pali} \to \cal / \Gamma_{\pal}$ which are Hausdorff approximations witnessing the Hausdorff convergence of the orbit spaces inside the enveloping orbit space. 
	
	Write $\mu\ali$ for the isometry $\cali / \Gamma_{\pali} \to \uali$ induced by $\pi_{\uali}$.
	
	\begin{center}
		\begin{tikzcd}
			\cali 					\arrow[two heads]{d}	& \cal \arrow{l}{F_i}[swap]{\cong} \arrow[two heads]{d} \\
			\cali / \Gamma_{\pali}	\arrow{d}{\mu\ali}[anchor=center,rotate=-90,yshift=-1ex,xshift=-0.2ex]{\cong}	& \cal / \Gamma_{\pal} \arrow{l}{f_i}[swap]{\cong}\\
			\uali											& \ual \arrow{l}{h_i}[swap]{\cong}
		\end{tikzcd}
	\end{center}
	
	Now the gap may be filled in by a homeomorphism $\phi_i \co \cal / \Gamma_{\pal} \to \ual$ given by $h_i ^{-1} \circ \mu\ali \circ f_i$. The $\phi_i$ make up a sequence of Gromov--Hausdorff approximations, and the sequence converges to some isometry $\phi \co \cal / \Gamma_{\pal} \to \ual$. 
	
	Then the $f_i$ may be adjusted slightly, setting $g_i = (\mu\ali)^{-1} \circ h_i \circ \phi$.
	Since $\phi_i$ converges to $\phi$, these homeomorphisms $g_i$ will also witness the Gromov--Hausdorff convergence of $\cali / \Gamma_{\pali}$ to $\cal / \Gamma_{\pal}$.
	By Theorem A, new equivariant homeomorphisms $G_i \co \cal \to \cali$ can be chosen which will induce the homeomorphisms $g_i$.
	
	This gives a non-smooth orbifold chart over $\ual$, $(\cal, \Gamma_{\pal}, \phi)$ such that the $h_i \co \ual \to \uali$ are orbifold homeomorphisms. The maps $h_{ij}$ are then also orbifold homeomorphisms.
\end{proof}

\section{Tameness of Alexandrov spaces}\label{s:tameness}

This section will provide the necessary background to Theorem \ref{t:cst} and justify its application in Theorem A.
The result is a refinement of Palais' classification of $G$--spaces \cite{palais} for orbit spaces which are ``tamely partitioned", and appears in \cite{hcloseactions}

For a subgroup $H$ of $G$, write $(H)$ for the conjugacy class of $H$. Say that $(H) \leq (K)$ if $K$ has a subgroup which is conjugate to $H$.

\begin{definition}\label{d:abstract}
	Let $G$ be a compact Lie group. Then an \emph{abstract orbit space} for $G$ is a locally compact, second countable space $Z$ together with a partition $\{ Z_{(H)}\}_{H \subset G}$ of $Z$ such that, for each $(H)$, $\cup \left\lbrace Z_{(K)} \st (K) \leq (H) \right\rbrace $ is open.
\end{definition}

A $G$--space over $Z$ is then a space with an action of $G$ by homeomorphisms, such that $X/G$ is homeomorphic to $Z$, via a homeomorphism that carries the orbit-type partition of $X/G$ to the partition on $Z$.

The notion of tameness used is quite a mild topological property, and it will be shown that the orbit spaces of isometric group actions on Alexandrov spaces satisfy it.
The definition first requires the concept of a controlled homotopy.

\begin{definition}
	Let $\mathcal{U}$ be an open cover of a topological space $Z$. Then a map $f \co Y \times [0,1] \to Z$ is called a \emph{$\mathcal{U}$--homotopy} if for each $y \in Y$ there is some $U \in \mathcal{U}$ such that $f(y,t) \in U$ for all $0 \leq t \leq 1$.
	
	In the case where $Z$ is metric and $\mathcal{U}$ is a cover by metric balls of radius $\frac{\epsilon}{2}$, a $\mathcal{U}$--homotopy may be called an \emph{$\epsilon$--homotopy}.
\end{definition}

It will be convenient to move from the partition on $Z$ to the related filtration.
For the purposes of this paper, a filtration may have an index set which is only partially ordered.
For $(H) \in \ooo$, write $Z_{\geq (H)}$ for the union of all $Z_{(K)}$ such that $(K) \geq (H)$. 
$Z_{\geq (H)}$ is a closed set.
These sets $Z_{\geq (H)}$ make up a filtration of $Z$ indexed by the partially ordered set $\ooo$, but with the reverse ordering.

\begin{definition}
	Let $Z$ be a filtered set (with the filtration indexed by a set which is possibly only partially ordered).
	The filtration is said to be \emph{tame} if for each $Y \subset Z$ which is a union of elements of the filtration
	and for each open cover $\mathcal{U}$ of $Z$
	there are a neighborhood $V$ of $Y$ 
	and a homotopy $h \co (Z \setminus Y) \times I \to Z \setminus Y$ satisfying:
	\begin{enumerate}
		\item $h$ is the identity on $(Z \setminus Y) \times \left\lbrace 0 \right\rbrace$,
		\item $h((Z \setminus Y) \times \left\lbrace 1\right\rbrace ) \subset Z \setminus V$,
		\item $h$ is a $\mathcal{U}$--homotopy, and 
		\item $h$ preserves every member of the filtration on $Z \setminus Y$. 
	\end{enumerate}
\end{definition}

The key result on $G$--spaces which is applicable in the proof of Theorem A can now be stated.

\begin{theorem}[Covering Sequence Theorem \cite{hcloseactions}]\label{t:cst}
	Let $X$ be a $G$--space having finitely many orbit-types, and let $Y = X/G$ be its orbit space. Let $Z$ be an abstract orbit space which is compact, metrizable, and tamely partitioned. Let $f_n \co  Z \to Y$ be a sequence of embeddings of $Z$ which carry the partition of $Z$ to the orbit-type partition of $Y$, restricted to the image of $f_n$. Suppose that $f = \lim_{n \to \infty} f_n$ exists, and is also such an embedding. 
	
	Then, for large enough $n$, the invariant subspaces of $X$ over the images of $f_n$ are equivariantly homeomorphic to that over the image of $f$, and the equivariant homeomorphisms induce the maps $f \circ f_n^{-1}$.
\end{theorem}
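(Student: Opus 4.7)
The plan is to reduce the theorem to a local slice problem, handle it via Palais' classification of $G$--spaces, and use the tameness hypothesis to glue the local equivalences into a global equivariant homeomorphism. First I would choose, for each orbit $G \cdot x$ over a point of $f(Z)$, a tube $G \times_{G_x} S_x$ in $X$. These tubes pull back along $f$ and $f_n$ to give coverings of the invariant subspaces $X_f = \pi^{-1}(f(Z))$ and $X_n = \pi^{-1}(f_n(Z))$, where $\pi \co X \to Y$ is the orbit map. Since $f_n \to f$ uniformly, for large $n$ the pulled-back tubes on the two sides cover the same underlying structure up to an arbitrarily small perturbation, so on each individual tube one obtains an equivariant homeomorphism from the slice theorem, descending to the restriction of $f \circ f_n^{-1}$ to the corresponding piece of $f_n(Z)$.

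Next I would organize these local equivalences by orbit type, using the finiteness of the orbit-type set. The plan is an induction on orbit types, ordered by the partial order $(H) \leq (K)$: start from the minimal (most regular) stratum and successively extend the equivalence across strata of increasingly singular isotropy. At each step, the already-constructed equivariant homeomorphism on the lower strata needs to be matched up with the local equivalences on the newly added stratum. This is exactly where the tameness of the partition is essential: given an open cover of $Z$ by arbitrarily small sets, tameness provides a controlled homotopy on $Z \setminus Z_{\geq (H)}$ that pushes out of a neighborhood of the new stratum while preserving all lower strata. Composing with such a homotopy allows one to retract the candidate homeomorphism off a collar of the newly added stratum, creating room for the gluing.

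The third step is to use these controlled retractions to adjust each local equivalence so that it agrees with its neighbors on overlaps. For $n$ large the required adjustment is small in the metric on $Z$, and the $\mathcal{U}$--homotopy condition in the tameness definition keeps everything inside the prescribed small open sets, so the isotropy-preserving condition is never violated during the deformation. Because $X$ has only finitely many orbit types, the induction terminates after finitely many steps, giving a global equivariant homeomorphism $X_n \to X_f$ whose underlying map of orbit spaces is $f \circ f_n^{-1}$.

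The main obstacle will be the inductive gluing step: one has to arrange simultaneously that (i) the local slice homeomorphisms on a newly added singular stratum agree with the already-constructed equivalence on the closure of the lower strata, and (ii) the gluing remains equivariant and continuous across the jump in isotropy. Without tameness of the filtration, there is no way to separate these two constraints, since a naive deformation might drag points across stratum boundaries; the $\mathcal{U}$--homotopy preserving each member of the filtration is exactly the technical tool that makes this compatible. Once that is in place, the rest of the argument is a careful bookkeeping exercise using Palais' classification applied stratum by stratum.
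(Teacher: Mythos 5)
You should note at the outset that the paper does not actually prove Theorem \ref{t:cst}: it is imported wholesale from \cite{hcloseactions}, and Section \ref{s:tameness} only verifies the tameness hypothesis needed to apply it to orbit spaces of Alexandrov spaces. So there is no in-paper proof to measure your argument against, and your sketch has to stand on its own.

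On its own terms, your outline follows the expected Palais-style template (tubes and slices, induction over the finitely many orbit types, tameness as the control mechanism), but the two steps carrying all the weight are asserted rather than proved. First, the claim that for large $n$ each tube yields ``an equivariant homeomorphism from the slice theorem, descending to the restriction of $f \circ f_n^{-1}$'' does not follow from the slice theorem. The slice theorem gives $\pi^{-1}(U) \cong G \times_{G_x} S_x$, but knowing that $f \circ f_n^{-1}$ is a partition-preserving homeomorphism between two pieces of the orbit space does not by itself produce an equivariant homeomorphism of the parts of $X$ lying over them that induces this map: one needs to know that the slice, viewed as a $G_x$--space over its image in the orbit space, is determined up to equivalence by the orbit-type data, and that is precisely the content of Palais' classification (applied inductively, since the slice action has fewer orbit types), not a formal consequence of having a tube. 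Second, the gluing step --- ``adjust each local equivalence so that it agrees with its neighbors on overlaps'' because ``the required adjustment is small'' --- is the actual heart of the theorem and cannot be dispatched by metric smallness. Local equivalences over a tube are highly non-unique (any two differ by an arbitrary self-equivalence over the base), and in this purely topological setting there is no controlled equivariant isotopy-extension theorem that lets you make neighboring choices agree by a small perturbation. The standard way such a statement is actually established is through Palais' Covering Homotopy Theorem: one shows that the pullbacks $f_n^{*}X$ and $f^{*}X$ are strongly equivalent $G$--spaces over $Z$ by producing a partition-preserving, controlled homotopy (a concordance) connecting the two embeddings, and tameness is exactly what allows nearby partition-preserving embeddings to be joined by such a stratum-preserving homotopy. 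Your sketch never constructs this homotopy, so the induction over orbit types has nothing to run on; moreover, the ad hoc deformations you propose would have to be shown not to change the induced map on orbit spaces, since the conclusion requires the equivariant homeomorphism to induce exactly $f \circ f_n^{-1}$, not merely some map close to it.
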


In the case under consideration, where the orbit space $Z$ is an Alexandrov space, and its partition is by extremal subsets, the remaining results of this section gives the necessary ``tameness'' requirement for application of the theorem. 

First, it will be established that tameness is a local property. 
Say that the filtration is locally tame if, for each closed $Y \subset Z$ as above, and for each $y \in Y$, there is an open set $U_y$ containing $y$ so that for each open cover $\mathcal{U}$ of $Z$ there is a filtration-preserving $\mathcal{U}$--homotopy $r \co (U_y \setminus Y) \times [0,1] \to Z\setminus Y$ deforming $U_y \setminus Y$ into $Z\setminus V$ for some open $V \supset Y$.

\begin{proposition}\label{p:localtameness}Let $Z$ be a compact metrizable space with a filtration. The filtration is tame if and only if it is locally tame.\end{proposition}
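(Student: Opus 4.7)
The forward direction is immediate: given tameness, $y \in Y$, and any open cover of $Z$, restrict the global homotopy guaranteed by tameness to any open neighborhood $U_y$ of $y$ and it serves as the local homotopy required for local tameness.

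For the converse, let $Y \subset Z$ be a closed union of filtration elements and let $\mathcal{U}$ be an open cover of $Z$. Since $Y$ is closed in the compact space $Z$, it is compact. Applying local tameness at each $y \in Y$ produces open neighborhoods $U_y$ with the stated property; extract a finite subcover $U_1, \ldots, U_k$ of $Y$ together with a shrinking $W_1, \ldots, W_k$ such that $\bar{W}_i \subset U_i$ and $\{W_i\}$ still covers $Y$. Fix continuous bump functions $\phi_i \co Z \to [0,1]$ with $\phi_i \equiv 1$ on $\bar{W}_i$ and $\phi_i \equiv 0$ off $U_i$.

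Refine $\mathcal{U}$ to a sufficiently fine open cover $\mathcal{U}'$ so that any concatenation of $k$ successive $\mathcal{U}'$-homotopies is automatically a $\mathcal{U}$-homotopy (working in a fixed compatible metric and letting $\epsilon$ be a Lebesgue number for $\mathcal{U}$, take $\mathcal{U}'$ to be a cover by balls of radius $\epsilon/(3k)$). Apply local tameness at each $y_i$ with the cover $\mathcal{U}'$ to obtain filtration-preserving $\mathcal{U}'$-homotopies $r_i \co (U_i \setminus Y) \times [0,1] \to Z \setminus Y$ and neighborhoods $V_i \supset Y$ with $r_i(U_i \setminus Y, 1) \subset Z \setminus V_i$. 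Define the cutoff homotopy $h^i(x, t) = r_i(x, \phi_i(x) t)$ on $U_i \setminus Y$, extended by the identity elsewhere; it is continuous, filtration-preserving, a $\mathcal{U}'$-homotopy, fixes $Z \setminus U_i$ pointwise, and carries $\bar{W}_i \setminus Y$ into $Z \setminus V_i$. Concatenate $h^1, \ldots, h^k$ on successive subintervals of $[0,1]$ to produce a single filtration-preserving $\mathcal{U}$-homotopy $h$.

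The main obstacle — the step I expect to require the most care — is verifying that $h(\cdot, 1)$ pushes $Z \setminus Y$ out of one common open neighborhood $V$ of $Y$, because a later stage $h^j$ could in principle move a point that had already been displaced by an earlier $h^i$ back toward $Y$. To control this, set $V_0 = \bigcap_i V_i$ and, using normality of $Z$, choose an open $V$ with $Y \subset V \subset \bar{V} \subset V_0$; let $\eta = \dist(\bar{V}, Z \setminus V_0) > 0$. Exploit the freedom in $\mathcal{U}'$ to force the total displacement of any point during the whole concatenation to be strictly less than $\eta$, so that a point once pushed into $Z \setminus V_i \subset Z \setminus V_0$ at stage $i$ stays outside $V$ for the remaining stages. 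Since we may shrink $V$ further so that it is contained in $\bigcup_i W_i$, every point of $V \setminus Y$ lies in some $\bar{W}_i$ and is therefore pushed out of $V_i \supset V$ at the corresponding stage; combined with the displacement bound, this shows $h(Z \setminus Y, 1) \subset Z \setminus V$, completing the verification of tameness.
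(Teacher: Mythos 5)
Your forward direction is fine, and the skeleton of your converse (cut off the local homotopies with bump functions, concatenate, and take the refined cover fine enough that $k$ successive fine homotopies give a $\mathcal{U}$--homotopy) is essentially the paper's. The gap is exactly at the step you flag as the main obstacle, and your fix does not work as stated. First, there is a circularity: the neighborhoods $V_i$ are only produced \emph{after} you apply local tameness with the cover $\mathcal{U}'$, and the definition gives no lower bound on how large the $V_i$ may be taken---they depend on $\mathcal{U}'$. So you cannot choose the mesh of $\mathcal{U}'$ smaller than $\eta = \dist(\bar{V}, Z \setminus V_0)$, because $\eta$ is not known until after $\mathcal{U}'$ (hence the $r_i$ and $V_i$) has been fixed; refining $\mathcal{U}'$ may shrink the $V_i$ and hence $\eta$, and there is no way to break this loop. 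Second, even granting a displacement bound, the assertion that ``every point of $V \setminus Y$ lies in some $\bar{W}_i$ and is therefore pushed out of $V_i$ at the corresponding stage'' conflates a point's initial (or final) position with its position at the start of stage $i$: the push-out $h^i(x,1) = r_i(x,1) \in Z \setminus V_i$ is only triggered when the \emph{current} position at stage $i$ lies in $\bar{W}_i$, and controlling that needs a further Lebesgue-number-type bound on displacement relative to $\{W_i\}$, which runs into the same circularity.

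The paper avoids both problems by never asking the final image to avoid the given $V$. Writing $N$ for the number of sets in the cover (your $k$), it takes a partition of unity $\{a_i\}$ subordinate to $\{U_i\}$, fixed \emph{before} the fineness of the cover is chosen, cuts off by $R_i(x,t) = r_i(x, a_i(x)t)$, reparametrizes each $r_i$ so that it has finished deforming into $Z \setminus V$ by time $\frac{1}{N+1}$, and requires the local homotopies to be fine enough, relative to the uniform continuity of the fixed functions $a_j$, that each changes every $a_j$ by less than $\frac{1}{(N+1)^3}$. Then each point, having $a_i \geq \frac{1}{N}$ for some $i$, still satisfies $a_i > \frac{1}{N+1}$ at its current position when stage $i$ arrives, so it enters the compact set $Z \setminus V$ at that stage. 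Later stages may move it back toward $Y$, but only within the images of $Z \setminus V$ under the remaining homotopies, which are compact subsets of $Z \setminus Y$; hence the final image of $Z \setminus Y$ lies in a compact subset of $Z \setminus Y$, and its complement is the required (possibly smaller) neighborhood of $Y$. No quantitative relation between the cover's fineness and the size of $V$ is ever needed. If you want to keep your bump-function set-up, replace your $\eta$-argument with this compactness argument; as written, the last step of your proof does not go through.
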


\begin{proof}
	Endow $Z$ with a metric, and replace the given cover $\mathcal{U}$ with a  cover by $\epsilon$--balls, and aim to construct an $\epsilon$--homotopy.
	
	Cover $Z$ with finitely many open sets $U_1, \ldots, U_N$ so that $Y$ is tame in each $U_i$.
	Choose continuous functions $a_i \co Z \to [0,1]$ so that the support of each $a_i$ is in $U_i$ and $\Sigma_i a_i = 1$.
	Let $r_i \co (U_i \setminus Y) \times [0,1] \to Z \setminus Y$ be an $\frac{\epsilon}{N}$--homotopy deforming $U_i \setminus Y$ into $Z \setminus V$ in a stratum-preserving manner for some open $V \supset Y$.
	(Since $N$ is finite, $V$ may be assumed not to depend on $i$.)
	By an appropriate choice of $r_i$, one may assume further that $r_i ((U_i  \setminus  Y) \times [\frac{1}{N+1} , 1]) \subset Z \setminus V$ and that $a_j \circ r_i$ is a $\frac{1}{(N+1)^3}$--homotopy for each $j$.  
	Extend each $r_i$ over $Z \times \left\lbrace 0\right\rbrace $ by the identity.
	
	New homotopies $R_i \co (Z  \setminus  Y) \times [0,1] \to Z \setminus Y$ can be constructed by $R_i(x,t) = r_i (x, a_i(x)t)$. Write $R^j \co (Z  \setminus  Y) \times [0,1] \to Z$ for the homotopy given by concatenating $R_1, \ldots R_j$. It is claimed that $R^N$ is the required deformation.
	
	Certainly since each $r_i$ is stratum-preserving, each $R_i$ is also, and so is each $R^j$. Since each $r_i$ is an $\frac{\epsilon}{N}$--homotopy, $R^N$ is an $\epsilon$--homotopy. 
	It remains only to show that $R^N((Z \setminus Y) \times \left\lbrace 1 \right\rbrace) \subset Z \setminus U$ for some open neighborhood $U$ of $Y$.
	
	For each $q \in Z \setminus Y$, there is some $i$ so that $a_i (q) \geq \frac{1}{N}$. 
	Because each of the homotopies $r_k$ changes the value of $a_i$ by no more than $\frac{1}{(N+1)^3}$, there is some $k \leq N$ so that $a_k (R^{k-1}(q,1)) > \frac{1}{N} - \frac{1}{(N+1)^2} > \frac{1}{N+1}$ and hence $R^k (q,1) \in Z \setminus V$. 
	In other words, every $q \in Z \setminus Y$ enters the compact subset $Z \setminus V$ at some point in the construction of $R^N$.
	The homotopy will continue to deform the subset $Z \setminus V$, but its image must remain compact.
	
	It follows that $R^N$ deforms $Z \setminus Y$ into a compact subset, and its complement is the desired $U$.
\end{proof}

\begin{proposition}
	Let $Z$ be a compact Alexandrov space, and let $E \subset Z$ be an extremal subset. Then for each $\epsilon > 0$ there are a neighborhood $V$ of $E$ and a homotopy $h \co (Z  \setminus  E) \times I \to Z  \setminus  E$ satisfying:
	\begin{enumerate}
		\item $h$ is the identity on $(Z  \setminus  E) \times \left\lbrace 0 \right\rbrace$, 
		\item $h((Z  \setminus  E) \times \left\lbrace 1\right\rbrace ) \subset Z  \setminus  V$,
		\item $h$ is an $\epsilon$--homotopy, and 
		\item $h$ preserves the extremal subsets of $Z  \setminus  E$. 
	\end{enumerate}
\end{proposition}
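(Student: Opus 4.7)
The plan is to build $h$ from the gradient flow $\Phi^t$ of the distance function $f = \dist(E, \cdot)$, using the Perelman--Petrunin theory of gradient flows of semi-concave functions. Two properties of this flow are essential. First, it is defined on $Z \setminus E$ and is continuous there. Second, and crucially, it preserves every extremal subset of $Z$, which is a consequence of the fact that at any point of another extremal subset $E'$ the gradient of $f$ lies in the tangent cone to $E'$. I would then define $h(x, s) = \Phi^{(\epsilon/2) s}(x)$. Since $f$ is $1$-Lipschitz its gradient has norm at most $1$, so $\dist(x, \Phi^{(\epsilon/2) s}(x)) \leq \epsilon/2$, making $h$ an $\epsilon$-homotopy; this delivers properties (1), (3), and (4) without additional work.

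The substantive content of the proposition lies in property (2): producing a single open $V \supset E$ disjoint from the image $\Phi^{\epsilon/2}(Z \setminus E)$. Rather than attack this uniformly on $Z$, I would apply the local-to-global principle of Proposition \ref{p:localtameness} (whose proof adapts verbatim from orbit-type filtrations to filtrations by extremal subsets) and reduce to showing local tameness at each $p \in E$, namely the existence of a neighborhood $U_p$ of $p$ on which the flow pushes $U_p \setminus E$ uniformly off $E$.

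For each $p \in E$, use the Corollary to the Stability Theorem \ref{t:relstability} to choose a cone-like neighborhood $U_p$ of $p$, identified with a neighborhood of the vertex of $T_p Z$ under a homeomorphism carrying $E \cap U_p$ to $T_p E$. In the tangent cone, the distance to $T_p E$ is positively homogeneous of degree $1$ under dilation about the vertex---an explicit calculation in the cone metric gives $\dist((r, u), T_p E) = r \sin \angle(u, \Sigma_p E)$ when $u$ lies within angular distance $\pi/2$ of $\Sigma_p E$---so the gradient has norm $1$ wherever the nearest point on $T_p E$ is unique, and the flow increases $f$ at essentially unit speed on $T_p Z \setminus T_p E$. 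Transferring this estimate through the cone-like homeomorphism (with error controlled by shrinking $U_p$) yields a uniform bound $f(\Phi^{\epsilon/2}(x)) \geq c\epsilon$ for $x \in U_p \setminus E$, so that $V_p = \{f < c\epsilon/2\} \cap U_p$ works locally. The main obstacle is precisely this final transfer: the cone-like homeomorphism is not an isometry, so the tangent-cone estimate only approximately governs $f$ on $U_p$, and one must invoke quantitative stability to absorb the discrepancy. An alternative, if this proves delicate, is to induct on $\dim Z$: apply the proposition inductively to the extremal subset $\Sigma_p E \subset \Sigma_p Z$ and cone off the resulting homotopy radially, producing a homotopy that respects the cone structure exactly.
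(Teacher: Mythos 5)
Your reduction to local tameness and your fallback remark are in the spirit of the paper, but the main route you propose has a genuine gap exactly where you flag it, and it is not a technicality that ``quantitative stability'' can absorb. Property (2) for the flow $h(x,s)=\Phi^{(\epsilon/2)s}(x)$ requires a uniform positive lower bound for $|\nabla_x\,\dist(E,\cdot)|$ on a punctured neighborhood $\{0<\dist(E,\cdot)<r_0\}$: without it the gradient curve starting at a point $x$ with $\dist(E,x)$ tiny may stall at (or creep toward) near-critical points of $\dist(E,\cdot)$ and never leave any fixed neighborhood $V$ of $E$, so no choice of $V$ works. Your proposed source for this bound --- computing $|\nabla\,\dist(T_pE,\cdot)|$ in the tangent cone and ``transferring the estimate through the cone-like homeomorphism'' --- cannot be made to work as stated, because the cone-like (stability) homeomorphism carries no Lipschitz or derivative control whatsoever: stability gives a homeomorphism close to a Gromov--Hausdorff approximation, i.e.\ additive distance distortion, and gradient or speed estimates simply do not pull back through such a map. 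Even the cone computation itself only gives $|\nabla|=1$ off the cut locus of $T_pE$, while the quantity you actually need concerns the flow of $\dist(E,\cdot)$ on $Z$ itself; no result you cite supplies the required bound, and establishing it for general extremal subsets is a substantive problem, not a routine transfer. (Smaller points: $\dist(E,\cdot)$ is only locally semiconcave on $Z\setminus E$, and the preservation of extremal sets by its gradient flow needs the Petrunin-type theorem for semiconcave functions; these are fixable with citations, but they do not rescue (2).)

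The paper avoids this issue entirely by never transporting a metric estimate through the cone-like homeomorphism: it transports only an open cover, which is a purely topological datum, since the tameness condition is formulated as a $\mathcal{U}$--homotopy. Concretely, the paper argues by induction on $\dim Z$ (your one-sentence fallback is indeed the correct strategy, but it needs real bookkeeping): cover $E$ by cone-like neighborhoods $U_i$ around points $p_i\in E$, pull back a cover of $Z$ by $\frac{\epsilon}{2N}$--balls to covers $\mathcal{U}_i$ of $T_{p_i}Z$, and then, because the tangent cone is non-compact, apply the inductive hypothesis on the compact space of directions $\Sigma_{p_i}Z$ at a sequence of angular scales $\delta_j$ over overlapping annuli $(t_j,t_{j+2})$ chosen so that the resulting cover star-refines $\mathcal{U}_i$; the annular homotopies are glued, coned off radially, and composed with a small radial strong deformation retraction to clear a neighborhood of the vertex, after which Proposition \ref{p:localtameness} assembles the local homotopies. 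If you want to salvage a flow-based proof, you would have to prove the missing regularity statement ($|\nabla\,\dist(E,\cdot)|\geq c$ near $E$) directly in $Z$ by comparison arguments, which is precisely the step your write-up leaves open.
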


\begin{proof}
	The proof is by induction on the dimension of $Z$.
	If $\dim (Z) = 1$, $Z$ is a circle or a closed interval, and the result clearly holds.
	Suppose the result has been shown for all compact Alexandrov spaces of dimension at most $n-1$.
	
	By Proposition \ref{p:localtameness} it is sufficient to show the result locally.
	Cover $E$ by sets $U_1, \ldots, U_N$ which are cone-like around points $p_1, \ldots, p_N \in E$.
	The result is shown if it can be shown for each $U_i$.
	
	Choose a finite cover of $Z$ by balls of radius $\frac{\epsilon}{2N}$.
	Since each $U_i$ is cone-like, these give finite covers $\mathcal{U}_i$ of each $T_{p_i} Z$.
	An inspection of the proof of Proposition \ref{p:localtameness} shows that it will be sufficient to construct $\mathcal{U}_i$--homotopies on each $T_{p_i} Z$.
	
	Since the tangent cone is not compact, Proposition \ref{p:localtameness} does not apply directly to $T_{p_i} Z$ itself.
	However, it does apply to the space of directions, and so it is not too hard to adapt it to the cone.

	Let $o$ be the vertex of $T_{p_i} Z$.
	By the compactness of $\Sigma_{p_i} Z$ (which will be written $\Sigma_i$ for convenience), there are an unbounded increasing sequence of numbers $0<t_0 < t_1 < t_2, \ldots$ and finite covers $\mathcal{V}_j$ of $\Sigma_i$ by balls of radius $\delta_j$ for each $j=0,1,2,\ldots$  so that  
	$$\mathcal{W}_i=
	\left\lbrace B_{o}(t_1)\right\rbrace  \cup
	\left\lbrace \mathcal{V}_j \times (t_j,t_{j+2}) : j=0,1,2,\ldots \right\rbrace .$$
	is a star-refinement of $\mathcal{U}_i$.
	
	By the induction hypothesis, for each $j$ there is a homotopy on $\Sigma_i  \setminus  \Sigma_{p_i}E$ deforming away from an open neighborhood of $\Sigma_{p_i}E$.
	This may be taken to create a $\mathcal{W}_i$--homotopy on $(\Sigma_i  \setminus  \Sigma_{p_i}E) \times (t_j,t_{j+2})$.
	By gluing these together, a $\mathcal{U}_i$--homotopy on $(\Sigma_i  \setminus  \Sigma_{p_i}E) \times (t_0,\infty)$ can be constructed.
	
	The homotopy can then easily be extended by coning to a $\mathcal{U}_i$--homotopy on $T_{p_i}Z  \setminus  T_{p_i}E$, but it will not deform away from an open neighborhood of the origin of the cone.
	If it is followed with a sufficiently small radial strong deformation retraction away from the origin, however, it will satisfy all the necessary properties.
	
	This gives a $\mathcal{U}_i$--homotopy on $T_{p_i}Z$ as required. These are $\frac{\epsilon}{N}$ homotopy on $U_i$, and by Proposition \ref{p:localtameness} can be glued together to given an $\epsilon$--homotopy on $Z$.
\end{proof}

\FloatBarrier

\bibliographystyle{amsabbrv}
\bibliography{C:/Users/John/mybib}

\end{document}